\documentclass[12pt,reqno]{amsart}

\usepackage{amsmath,amssymb,amsthm,mathtools}
\usepackage[margin=1in]{geometry}
\usepackage[hidelinks]{hyperref}
\usepackage{enumitem}
\usepackage{microtype}
\usepackage{etoolbox}

\newcommand{\Hh}{\mathbb H}
\newcommand{\R}{\mathbb R}
\newcommand{\Z}{\mathbb Z}
\newcommand{\Teich}{\mathcal T}
\newcommand{\E}{\mathcal E}
\newcommand{\Hopf}{\operatorname{Hopf}}
\newcommand{\Isom}{\operatorname{Isom}}
\newcommand{\Area}{\operatorname{Area}}

\newcommand{\Dist}{\operatorname{Dist}}

\theoremstyle{plain}
\newtheorem{theorem}{Theorem}[section]
\newtheorem{proposition}[theorem]{Proposition}
\newtheorem{lemma}[theorem]{Lemma}
\newtheorem{corollary}[theorem]{Corollary}

\theoremstyle{definition}

\newtheorem{remark}[theorem]{Remark}
\newtheorem{question}[theorem]{Question}

\title[Cannon--Thurston maps, harmonic maps, and minimal surfaces]
{Cannon--Thurston Maps, Harmonic Maps, and Minimal Surfaces\\
in Fibered Hyperbolic $3$--Manifolds}

\author{Alberto Verjovsky}
\date{September 24, 2026}

\subjclass[2020]{Primary 57K32, 58E20; Secondary 53A10, 20F67, 30F40, 30F60.}
\keywords{Cannon--Thurston map, harmonic map, minimal surface, fibered hyperbolic $3$--manifold, pseudo-Anosov monodromy, Teichm\"uller space, Hopf differential, least-area surface, Peano curve.}
\makeatletter
\patchcmd{\@settitle}{\uppercasenonmath\@title}{}{}{}
\makeatother
\begin{document}

\begin{abstract}
Let $M_f$ be the closed hyperbolic $3$--manifold which is the mapping torus of a pseudo-Anosov homeomorphism $f$ of a closed surface $S$ of genus at least two.  The inclusion of the fiber subgroup
\(\pi_1(S)\hookrightarrow \pi_1(M_f)\) has a Cannon--Thurston boundary map
\(\partial\pi_1(S)\cong S^1\to S^2\cong\partial\pi_1(M_f)\), which is a Peano curve.  We relate this map to two variational representatives of the fiber homotopy class: the harmonic map of Eells--Sampson and Hartman, and a least-area embedded minimal surface.  Equivariant lifts of homotopic maps are a bounded distance apart; hence, for every marked conformal structure on $S$, the lift of the harmonic representative extends continuously to the closed disk with boundary value the Cannon--Thurston map, and a least-area fiber lifts to a properly embedded minimal plane in $\Hh^3$ with the same boundary map.  We give area and curvature bounds for a least-area fiber, show that its largest principal curvature is at least one, and study the energy of the harmonic representatives as a function on Teichm\"uller space, its critical points, which are the zeros of the Hopf differential, and its invariance under the monodromy.
\end{abstract}
\maketitle

\section{The fiber subgroup and its boundary map}

Let $S$ be a closed connected orientable surface of genus $g\geq2$, and let
$f:S\to S$ be an orientation-preserving pseudo-Anosov homeomorphism: there are transverse measured singular foliations
\((\mathcal F^s,\mu^s)\), \((\mathcal F^u,\mu^u)\) and a number $\lambda>1$ such that
\[
f(\mathcal F^s,\mu^s)=(\mathcal F^s,\lambda^{-1}\mu^s),
\qquad
f(\mathcal F^u,\mu^u)=(\mathcal F^u,\lambda\mu^u).
\]
For a choice of hyperbolic metric on $S$ the two foliations correspond to measured geodesic laminations, the stable and unstable laminations of $f$ \cite{FLP}.

The mapping torus
\[
M_f=S\times[0,1]/(x,1)\sim(f(x),0)
\]
is a closed hyperbolic $3$--manifold by Thurston's hyperbolization theorem for fibered $3$--manifolds \cite{Th,Ot,McM}; we fix its hyperbolic metric, and identify $\partial\Hh^3=S^2$.  Let $j:S\to M_f$, $j(x)=[x,0]$, be the fiber inclusion, $G=\pi_1(S)$ and $\Gamma=\pi_1(M_f)$.  There is an exact sequence
\[
1\longrightarrow G\xrightarrow{\;\iota\;} \Gamma\longrightarrow\Z\longrightarrow1,
\]
and we regard $G$ as a normal subgroup of the cocompact Kleinian group $\Gamma<\Isom^+(\Hh^3)$.  We fix $t\in\Gamma$ mapping to a generator of $\Z$; conjugation by $t$ preserves $G$, and its restriction to $G$ represents the outer automorphism class of $f_*$ (for the appropriate choice of generator).

A \emph{marked hyperbolic structure} $X$ on $S$ identifies the universal cover $\widetilde S$ with $\Hh^2$ and gives a discrete faithful representation $\rho_X:G\to\Isom^+(\Hh^2)$ by deck transformations.  Fix a lift $\widetilde j:\Hh^2\to\Hh^3$ of $j$.  There is a unique injective homomorphism $\rho:G\to\Gamma<\Isom^+(\Hh^3)$ with image $G$ such that
\[
\widetilde j\circ\rho_X(\gamma)=\rho(\gamma)\circ\widetilde j\qquad(\gamma\in G).
\]
A map $u:\Hh^2\to\Hh^3$ satisfying $u\circ\rho_X(\gamma)=\rho(\gamma)\circ u$ for all $\gamma\in G$ is called \emph{$\rho$--equivariant}.  The orbit map $\gamma\mapsto\rho_X(\gamma)z_0$ is a quasi-isometry $G\to\Hh^2$ and induces a homeomorphism $\partial G\cong S^1=\partial\Hh^2$ which does not depend on $z_0$.  For two marked hyperbolic structures $X,Y$ the resulting homeomorphism $\partial\Hh^2_X\to\partial\Hh^2_Y$ is the boundary extension of any $G$--equivariant quasi-isometry $\Hh^2_X\to\Hh^2_Y$, for instance the lift of the identity of $S$.  In what follows, boundary maps defined on $\partial\Hh^2$ are regarded as maps on $\partial G$ through this identification.

A \emph{Peano curve} is a continuous surjection $S^1\to S^2$.

\begin{theorem}[Cannon--Thurston \cite{CT}; see also \cite{Mi1,Mi2,Mj}]
\label{thm:CT}
The lift $\widetilde j:\Hh^2\to\Hh^3$ extends continuously to a map
\[
\widehat j:\Hh^2\cup S^1\longrightarrow\Hh^3\cup S^2 .
\]
The boundary map $\partial\iota:=\widehat j|_{S^1}:S^1\to S^2$ is $\rho$--equivariant and surjective; in particular it is a Peano curve.
\end{theorem}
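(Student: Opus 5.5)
The plan is the classical Cannon--Thurston argument through the geometry of the infinite cyclic cover, in Mitra's ``ladder'' form. Write $\widetilde M\cong S\times\R$ for the cover of $M_f$ associated to $G$. Its universal cover is $\Hh^3$, and $\widetilde j$ parametrizes the lift $\widetilde S_0$ of one fiber. Since $S$ is compact, the intrinsic metric on $\widetilde S_0$ is $G$--equivariantly quasi-isometric to $\Hh^2_X$. Continuity of the extension, and its independence of choices, reduce to one criterion (Mitra's). For every $R$ there is $R'$, tending to infinity with $R$, such that whenever a geodesic segment $\lambda\subset\Hh^2$ stays at distance $\geq R$ from a base point $z_0$, the hyperbolic geodesic in $\Hh^3$ joining the endpoints of $\widetilde j(\lambda)$ stays at distance $\geq R'$ from $\widetilde j(z_0)$. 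Given this, sequences $z_n\to\xi\in S^1$ and $w_n\to\xi$ can be joined by segments escaping to infinity in $\Hh^2$, so their images converge to the same point of $S^2$. Hence $\widehat j$ is well defined and continuous on $\Hh^2\cup S^1$, and it is continuous at interior points trivially.

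To prove the criterion, I would build a ladder $L_\lambda$. Cover $\Hh^3$ by the lifts $\widetilde S_n=t^n\widetilde S_0$, $n\in\Z$, of the fibers. Consecutive lifts are at bounded distance, and the flow maps between them are uniform quasi-isometries, namely lifts of $f^{\pm1}$. Set $\lambda_0=\lambda$, and let $\lambda_n\subset\widetilde S_n$ be the intrinsic geodesic joining the images of the endpoints of $\lambda_{n-1}$ under the flow. The key steps are as follows.
\begin{enumerate}[label=(\roman*)]
\item Define a coarse Lipschitz retraction $\Pi_\lambda:\Hh^3\to L_\lambda$. On each $\widetilde S_n$ it is the nearest-point projection onto $\lambda_n$, which is coarsely Lipschitz because $\widetilde S_n$ is hyperbolic. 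Checking compatibility between adjacent levels uses the fact that quasi-isometries of hyperbolic planes coarsely commute with projections to quasigeodesics.
\item Deduce that $L_\lambda$ is uniformly quasiconvex. The geodesic between its endpoints projects to a path in $L_\lambda$ of comparable length, so by stability of quasigeodesics in $\Hh^3$ it lies near $L_\lambda$.
\item Show that $L_\lambda$ is far from $\widetilde j(z_0)$ whenever $\lambda$ is far from $z_0$.
\end{enumerate}

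Step (iii) is where pseudo-Anosov dynamics enters, and I expect it to be the main obstacle. Following Cannon--Thurston and Mitra, I would use the flaring, or hyperbolic-ladder, property that comes from $\lambda>1$. Lengths of $\lambda_n$ in $\widetilde S_n$ grow exponentially in at least one direction of $n$, unless $\lambda$ is short, and the Bestvina--Feighn combination theorem encodes this as hyperbolicity of $\Gamma$. Consequently a path from $\widetilde j(z_0)$ to $\lambda_n$ must either travel distance about $|n|$ vertically or reach $\lambda_n$ inside $\widetilde S_n$. In the second case the distance is bounded below by $d_{\widetilde S_0}(z_0,\lambda)$ transported by a quasi-isometry whose constants grow at most exponentially in $|n|$. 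Balancing these two alternatives gives $R'\to\infty$. This needs care: one has to track how the flow distorts distance to a base point, and here a first attempt might fail without the uniform flaring estimate.

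Equivariance follows from the equivariance of $\widetilde j$, since $\widetilde j\circ\rho_X(\gamma)=\rho(\gamma)\circ\widetilde j$ passes to limits. For surjectivity, the image $\partial\iota(S^1)$ is compact, nonempty and $\rho(G)$--invariant. The normal subgroup $G$ is nonelementary, so its limit set equals that of $\Gamma$, which is all of $S^2$ because $\Gamma$ is cocompact. The limit set is the smallest nonempty closed invariant set, so the image contains it. Alternatively, every orbit point $\rho(\gamma)\widetilde j(z_0)$ lies in the image of the closure of $\Hh^2$, and these points accumulate on all of $S^2$. Either way $\partial\iota$ is onto, so it is a Peano curve.
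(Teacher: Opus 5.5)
Your plan follows the route the paper takes by citation. The paper does not reprove existence; it invokes Cannon--Thurston and Mitra's theorem for hyperbolic normal subgroups. Your surjectivity argument (the image is a nonempty closed $\rho(G)$--invariant set, and $\Lambda(G)=\Lambda(\Gamma)=S^2$ since $G\triangleleft\Gamma$) is the paper's, and equivariance is fine. However, two steps in your sketch of Mitra's argument are justified in a way that does not work.

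\textbf{Step (ii).} The projected path $\Pi_\lambda(\mu)$ need not be a quasigeodesic. A coarse Lipschitz map bounds the lengths of image subpaths from above, but gives no lower bound on the distances between their endpoints, so the Morse lemma does not apply. A path of length $O(\ell(\mu))$ with the same endpoints only places $\mu$ in an $O(\log\ell(\mu))$--neighborhood of it. That bound is not uniform in $\lambda$ and is useless for the criterion. What you need is Mitra's lemma that a coarse Lipschitz retract of a $\delta$--hyperbolic space is uniformly quasiconvex. Its proof is local:
\begin{enumerate}[label=(\alph*)]
\item take $x\in\mu$ at maximal distance $D$ from $L_\lambda$, and $y,z\in\mu$ at distance about $2D$ from $x$ on either side;
\item the path $[y,\Pi_\lambda y]\cup\Pi_\lambda([y,z])\cup[\Pi_\lambda z,z]$ has length $O(D)$ yet stays at distance $\gtrsim D$ from $x$;
\item exponential divergence in $\Hh^3$ rules this out for large $D$.
\end{enumerate}

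\textbf{Step (iii).} Flaring is not the mechanism here. In Mitra's argument, the dynamics of $f$ enters only through the hyperbolicity of the ambient space used in (ii), which here is supplied by hyperbolization. Your ``either/or'' is not a dichotomy, since both lower bounds hold at once. Also, the horizontal bound you describe is an intrinsic distance in $\widetilde S_n$; it still has to be converted into a lower bound for $d_{\Hh^3}$.

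The missing ingredient is the following. A single flow step is a uniform quasi-isometry between hyperbolic planes. Hence each rung $\lambda_n$ lies in a uniform $C$--neighborhood of the flow image of the adjacent rung nearer level $0$. So every $x\in L_\lambda\cap\widetilde S_n$ is joined to a point $x_0\in\lambda_0$ by a path of length at most $C|n|$. Let $\psi$ be a properness gauge for $\widetilde j$, so that $d_{\Hh^3}(\widetilde j(z_0),x_0)\ge\psi(R)$ up to a bounded error. Let $C'$ be the Lipschitz constant of the height function $\Hh^3\to\R$ lifted from the fibration $M_f\to S^1$. Then, up to an additive constant and uniformly in $n$,
\[
d_{\Hh^3}(\widetilde j(z_0),x)\ \ge\ \max\bigl(|n|/C',\ \psi(R)-C|n|\bigr)\ \ge\ \frac{\psi(R)}{1+CC'},
\]
which is the required $R'$. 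Your version with exponentially growing quasi-isometry constants can also be completed, but only after adding the properness of $\widetilde S_n\subset\Hh^3$, uniformly in $n$.
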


Mitra \cite{Mi2} proved the corresponding statement for the inclusion of a hyperbolic normal subgroup in a hyperbolic group: $\iota$ extends continuously to $G\cup\partial G\to\Gamma\cup\partial\Gamma$.  Surjectivity holds because the image of $\partial\iota$ is the limit set of $G$, and the limit set of a nontrivial normal subgroup of the cocompact group $\Gamma$ is $\Lambda(\Gamma)=S^2$.  Since a continuous injection $S^1\to S^2$ is not onto, $\partial\iota$ is not injective.

\section{Bounded distance and boundary maps}

\begin{lemma}
\label{lem:bounded}
Let $u,v:\Hh^2\to\Hh^3$ be continuous maps with
\[
\sup_{z\in\Hh^2}d_{\Hh^3}(u(z),v(z))<\infty.
\]
Suppose that $v$ extends continuously to
\(\widehat v:\Hh^2\cup S^1\to \Hh^3\cup S^2\) with $\widehat v(S^1)\subset S^2$.  Then $u$ has a unique continuous extension $\widehat u:\Hh^2\cup S^1\to \Hh^3\cup S^2$, and
\(\widehat u|_{S^1}=\widehat v|_{S^1}\).
\end{lemma}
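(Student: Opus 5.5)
We define $\widehat u$ to be $u$ on $\Hh^2$ and $\widehat v|_{S^1}$ on $S^1$. Uniqueness is immediate because $\Hh^2$ is dense in $\Hh^2\cup S^1$ and $\Hh^3\cup S^2$ is Hausdorff. So the only issue is continuity of $\widehat u$ at points $\xi\in S^1$; continuity at interior points is just continuity of $u$.

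The key geometric input is a standard fact about the compactification $\Hh^3\cup S^2$, which we check in the ball model. Let $C=\sup_z d_{\Hh^3}(u(z),v(z))$. If $x_n,y_n\in\Hh^3$ satisfy $d(x_n,y_n)\le C$ and $x_n\to\eta\in S^2$, then $y_n\to\eta$.

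To prove this, use the ball model $B^3$ centered at the origin $o$. The hyperbolic metric is $\frac{2|dx|}{1-|x|^2}$. A hyperbolic ball of radius $C$ centered at a point at hyperbolic distance $r$ from $o$ is a Euclidean ball of Euclidean radius at most $c(C)e^{-r}$, which tends to $0$ as $r\to\infty$. This follows from the explicit formula for hyperbolic balls in the ball model, or from the fact that the conformal factor is comparable to $e^{r}$ on such a ball. Since $x_n\to\eta\in S^2$, we have $d(o,x_n)\to\infty$. Hence the Euclidean distance $|x_n-y_n|\to0$, and so $y_n\to\eta$ in the Euclidean topology on $\overline{B^3}$, which is the topology of $\Hh^3\cup S^2$.

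We now prove continuity at $\xi\in S^1$. Since $\Hh^2\cup S^1$ is metrizable, it suffices to use sequences. Let $p_n\to\xi$ in $\Hh^2\cup S^1$, and split the sequence into the terms with $p_n\in\Hh^2$ and those with $p_n\in S^1$.

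\emph{Terms in $S^1$.} For these, $\widehat u(p_n)=\widehat v(p_n)\to\widehat v(\xi)$ by continuity of $\widehat v$.

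\emph{Terms in $\Hh^2$.} For these, $v(p_n)\to\widehat v(\xi)\in S^2$, again by continuity of $\widehat v$ and the hypothesis $\widehat v(S^1)\subset S^2$. Since $d(u(p_n),v(p_n))\le C$, the fact above gives $u(p_n)\to\widehat v(\xi)=\widehat u(\xi)$.

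If both subsequences are infinite, each converges to $\widehat u(\xi)$, and therefore so does the whole sequence.

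The main step is the metric fact that bounded-distance sequences have the same limit on $S^2$. It is here that the hypothesis $\widehat v(S^1)\subset S^2$ is essential: without it, $v(p_n)$ could converge to an interior point, and $u(p_n)$ need not follow it. The shrinking of hyperbolic balls in the ball model handles this cleanly.
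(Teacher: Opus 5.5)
Your proof is correct and follows the paper's argument step by step. The definition of $\widehat u$, the sequential check at boundary points (split into terms in $S^1$ and in $\Hh^2$), and the density/Hausdorff uniqueness argument are all the same. The one difference is how you verify the key fact that bounded-distance sequences have the same limit in $S^2$: you use the $e^{-r}$ shrinking of hyperbolic balls in the ball model, while the paper uses the Gromov product estimate $(x_n\cdot y_n)_o\ge d(o,x_n)-C\to\infty$; both arguments are valid.
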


\begin{proof}
Let $C=\sup_z d(u(z),v(z))$ and define $\widehat u=u$ on $\Hh^2$ and $\widehat u=\widehat v$ on $S^1$.  Both compactifications are metrizable, so it suffices to check sequential continuity at points $\xi\in S^1$.  Let $z_n\to\xi$ in $\Hh^2\cup S^1$.  For the terms $z_n\in S^1$ we have $\widehat u(z_n)=\widehat v(z_n)\to\widehat v(\xi)$.  For the terms $z_n\in\Hh^2$ we have $v(z_n)\to\widehat v(\xi)\in S^2$ and $d(u(z_n),v(z_n))\le C$.  In $\Hh^3$, if $x_n\to\zeta\in S^2$ and $d(x_n,y_n)\le C$, then the Gromov products satisfy $(x_n\cdot y_n)_o\ge d(o,x_n)-C\to\infty$, so $y_n\to\zeta$.  Hence $u(z_n)\to\widehat v(\xi)$.  Uniqueness holds because $\Hh^2$ is dense in $\Hh^2\cup S^1$ and the target is Hausdorff.
\end{proof}

\begin{lemma}
\label{lem:equivariantbounded}
Let a group $G$ act on a topological space $Y$ and by isometries on a metric space $Z$, and suppose there is a compact set $K\subset Y$ with $G\cdot K=Y$.  If $a,b:Y\to Z$ are continuous and $G$--equivariant, then
\(\sup_{y\in Y}d_Z(a(y),b(y))<\infty\).
\end{lemma}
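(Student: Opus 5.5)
The plan is to show that the displacement function $\delta(y)=d_Z(a(y),b(y))$ is $G$--invariant and continuous, and then bound it on the compact set $K$.

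First, $\delta$ is $G$--invariant. For $g\in G$ and $y\in Y$, equivariance gives $a(gy)=g\cdot a(y)$ and $b(gy)=g\cdot b(y)$. Since $G$ acts on $Z$ by isometries,
\[
\delta(gy)=d_Z(g\cdot a(y),g\cdot b(y))=d_Z(a(y),b(y))=\delta(y).
\]

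Second, $\delta$ is continuous. The map $y\mapsto(a(y),b(y))$ is continuous from $Y$ into $Z\times Z$ with the product topology. The metric $d_Z:Z\times Z\to\R$ is continuous, indeed $1$--Lipschitz for the sum metric on $Z\times Z$. Hence $\delta$ is a composition of continuous maps.

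Third, we conclude. Since $K$ is compact and $\delta$ is continuous and real-valued, $\delta(K)$ is a compact subset of $\R$, so $C:=\sup_K\delta<\infty$; if $K$ is empty then $Y$ is empty and there is nothing to prove. Any $y\in Y$ can be written $y=gk$ with $g\in G$ and $k\in K$, because $G\cdot K=Y$. Then $\delta(y)=\delta(k)\le C$, which gives $\sup_Y\delta\le C<\infty$.

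There is no real obstacle here. The only point needing care is that no hypothesis on the $G$--action on $Y$ (for instance continuity) is used: invariance of $\delta$ is purely algebraic, and continuity is only needed for $\delta$ restricted to $K$. In the intended application, $Y=\Hh^2$ with the $\rho_X$--action, $K$ is a closed fundamental domain, $Z=\Hh^3$, and $a,b$ are the lifts of homotopic maps such as $\widetilde j$ and a harmonic lift. There $K$ exists because $S$ is closed, and the lemma then feeds into Lemma~\ref{lem:bounded}.
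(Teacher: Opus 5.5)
Your proposal is correct and follows the paper's proof: the displacement function $y\mapsto d_Z(a(y),b(y))$ is continuous, hence bounded on $K$, and $G$--invariant because $G$ acts on $Z$ by isometries, so the bound on $K$ extends to all of $Y=G\cdot K$. Your remarks about the empty case and about not needing continuity of the $G$--action on $Y$ are accurate but not needed for the argument.
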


\begin{proof}
The function $y\mapsto d(a(y),b(y))$ is continuous, hence bounded on $K$, and it is $G$--invariant because $d(a(\gamma y),b(\gamma y))=d(\gamma a(y),\gamma b(y))=d(a(y),b(y))$.
\end{proof}

\begin{proposition}
\label{prop:anyrepresentative}
Let $X$ be a marked hyperbolic structure on $S$ and let $u:\Hh^2\to\Hh^3$ be a continuous $\rho$--equivariant map.  Then $u$ extends continuously to $\Hh^2\cup S^1$, and its boundary map is $\partial\iota$.  In particular this holds for the $\rho$--equivariant lift of any continuous map $a:S\to M_f$ homotopic to $j$.
\end{proposition}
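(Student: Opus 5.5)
The plan is to compare $u$ with the lift $\widetilde j$, which Theorem~\ref{thm:CT} already tells us extends continuously with boundary map $\partial\iota$, and then transfer that extension to $u$ via Lemma~\ref{lem:bounded}.

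The first step is to get the bounded-distance hypothesis from Lemma~\ref{lem:equivariantbounded}. We take $Y=\Hh^2$ with $G$ acting through $\rho_X$, and $Z=\Hh^3$ with $G$ acting isometrically through $\rho$. Since $X$ is a closed hyperbolic surface, $\rho_X(G)$ is cocompact, so a compact fundamental domain $K$ (for instance a closed ball of radius larger than the diameter of $S$) satisfies $G\cdot K=\Hh^2$. Both $u$ and $\widetilde j$ are continuous and $\rho$--equivariant; the latter by the defining relation $\widetilde j\circ\rho_X(\gamma)=\rho(\gamma)\circ\widetilde j$. Lemma~\ref{lem:equivariantbounded} therefore gives
\[
\sup_{z\in\Hh^2}d_{\Hh^3}\bigl(u(z),\widetilde j(z)\bigr)<\infty .
\]
Applying Lemma~\ref{lem:bounded} with $v=\widetilde j$ then shows that $u$ extends continuously, with $\widehat u|_{S^1}=\widehat j|_{S^1}=\partial\iota$. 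Here we use that $\widehat j(S^1)\subset S^2$, which holds because $\partial\iota$ maps into $S^2$ by Theorem~\ref{thm:CT}. Everything here is formulated for the fixed marking $X$ used to define $\rho$ and $\widetilde j$, so no change of boundary identification is needed.

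For the final sentence, let $a:S\to M_f$ be homotopic to $j$ via a homotopy $H:S\times[0,1]\to M_f$. We lift $H$ to $\widetilde H:\Hh^2\times[0,1]\to\Hh^3$ starting at $\widetilde j$, using the homotopy lifting property of the covering $\Hh^3\to M_f$ applied to $H\circ(\pi_X\times\mathrm{id})$. We set $\widetilde a=\widetilde H(\cdot,1)$. To check that $\widetilde a$ is $\rho$--equivariant, observe that for each $\gamma$ the maps $\widetilde H(\rho_X(\gamma)z,s)$ and $\rho(\gamma)\widetilde H(z,s)$ are both lifts of the same map $\Hh^2\times[0,1]\to M_f$, and they agree at $s=0$. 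Since $\Hh^2\times[0,1]$ is connected, uniqueness of lifts shows they agree everywhere. Hence $\widetilde a$ is a continuous $\rho$--equivariant map, and the first part of the proposition applies to it.

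The only step needing care is this equivariance of the lifted homotopy. Through it, "the $\rho$--equivariant lift of $a$" is the lift obtained from the homotopy. Any other lift differs by a deck transformation; it is $\rho$--equivariant only when that element centralizes $G$, which is trivial in $\Gamma$. So the lift is canonical, and the argument for this step is the standard unique-lifting one.
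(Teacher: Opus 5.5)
Your proposal is correct and matches the paper's proof: Lemma~\ref{lem:equivariantbounded} gives bounded distance between $u$ and $\widetilde j$, Lemma~\ref{lem:bounded} together with Theorem~\ref{thm:CT} gives the extension with boundary map $\partial\iota$, and the lifted homotopy is shown to be $\rho$--equivariant because, for each $\gamma$, the two lifts agree at $s=0$ and lifts are unique. Your extra remark, that the $\rho$--equivariant lift is canonical because $G$ has trivial centralizer in $\Gamma$, is correct but not needed for the statement.
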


\begin{proof}
Apply Lemma~\ref{lem:equivariantbounded} to $u$ and $\widetilde j$, with $K$ a closed fundamental polygon for $\rho_X(G)$; then apply Lemma~\ref{lem:bounded} with $v=\widetilde j$ and Theorem~\ref{thm:CT}.  If $H:S\times[0,1]\to M_f$ is a homotopy from $j$ to $a$, lifting $H$ with initial map $\widetilde j$ gives a lift $\widetilde a$ of $a$; for each $\gamma\in G$ the maps $\widetilde H_s\circ\rho_X(\gamma)$ and $\rho(\gamma)\circ\widetilde H_s$ are lifts of $H_s$ agreeing at $s=0$, hence for all $s$, so $\widetilde a$ is $\rho$--equivariant.
\end{proof}

\begin{remark}
\label{rem:otherlifts}
Every other lift of $a$ has the form $\delta\circ\widetilde a$ with $\delta\in\Gamma$; it is equivariant for $\gamma\mapsto\delta\rho(\gamma)\delta^{-1}$, and its boundary map is $\delta\circ\partial\iota$.  Such a map has the same image $S^2$ as $\partial\iota$.  If $\delta\in G$, it induces the same equivalence relation on $S^1$ as $\partial\iota$: indeed $\delta\circ\partial\iota=\partial\iota\circ\rho_X(\delta)$, so postcomposition by $\delta$ only relabels the image points.
\end{remark}

We also need the maps $S\to M_f$ that are homotopic to $j$ up to a change of marking.

\begin{lemma}
\label{lem:markings}
Let $\psi:S\to S$ be a homeomorphism.  Then $j\circ\psi$ is freely homotopic to $j$ if and only if $\psi$ is isotopic to $f^n$ for some $n\in\Z$.
\end{lemma}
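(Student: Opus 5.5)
The plan is to translate free homotopy of maps into a statement about outer automorphisms of $G$, and then use the exact sequence $1\to G\to\Gamma\to\Z\to1$ together with the Dehn--Nielsen--Baer theorem.

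\emph{The direction ``if''.} Suppose $\psi$ is isotopic to $f^n$. Then $j\circ\psi\simeq j\circ f^n$, so it suffices to show $j\circ f\simeq j$. The mapping torus gives this directly. The homotopy $H_s(x)=[f(x),s]$, $s\in[0,1]$, starts at $H_0=j\circ f$ and ends at $H_1(x)=[f(x),1]=[f^2(x),0]$. That endpoint is not $j$, so this naive formula needs fixing. Instead use $H_s(x)=[x,1-s]$ for $s\in[0,1]$. It starts at $H_0(x)=[x,1]=[f(x),0]=j(f(x))$ and ends at $H_1(x)=[x,0]=j(x)$. So $j\circ f\simeq j$, and by induction, together with the inverse homotopy, $j\circ f^n\simeq j$ for every $n\in\Z$.

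\emph{The direction ``only if''.} Suppose $j\circ\psi$ is freely homotopic to $j$. Choose basepoints $x_0$ and $j(x_0)$. A free homotopy tracks $j(\psi(x_0))$ along a path $\beta$ to $j(x_0)$, so $j_*\circ\psi_*=c_\beta\circ j_*$ as maps $\pi_1(S,x_0)\to\pi_1(M_f,j(x_0))$, after a change of basepoint. Identifying via $\iota$, there is an element $\delta\in\Gamma$ with
\[
\iota\circ\psi_*=\mathrm{conj}_\delta\circ\iota
\]
on $G$, up to inner automorphisms of $G$ coming from basepoint paths in $S$. Write $\delta=t^n g$ with $g\in G$, which is possible by the exact sequence. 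Conjugation by $g$ is inner in $G$, and conjugation by $t$ represents the outer class of $f_*$. Hence $\psi_*=f_*^{\,n}$ in $\operatorname{Out}(G)$, where we use that $\iota$ is injective, so equality after applying $\iota$ gives equality in $G$.

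By Dehn--Nielsen--Baer, homeomorphisms of a closed surface inducing the same outer automorphism are homotopic. By Baer/Epstein, homotopic homeomorphisms of a closed surface are isotopic. Therefore $\psi$ is isotopic to $f^n$. Here $\psi$ is a priori allowed to reverse orientation, but its outer class equals that of the orientation-preserving map $f^n$, so it preserves orientation automatically.

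\emph{The main obstacle.} I expect the hardest part to be the careful bookkeeping in the second direction. One must check that the element $\delta$ produced by the free homotopy genuinely normalizes $\iota(G)$; this is automatic since $G$ is normal. One must also confirm that the class of $\delta$ in $\Gamma/G\cong\Z$ gives exactly a power of the chosen generator. Both points follow from the exact sequence and the normalization of $t$ fixed earlier in the paper.
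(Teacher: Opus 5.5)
Your proposal is correct and follows the paper's proof essentially step for step. The mapping-torus homotopy $[x,s]$ (which you run backwards) gives $j\circ f^n\simeq j$. Conversely, a free homotopy yields $\iota\circ\psi_*=c_\delta\circ\iota$; writing $\delta$ as $t^n$ times an element of $G$ identifies the outer class of $\psi_*$ with that of $f_*^n$, and Dehn--Nielsen--Baer together with the homotopy-implies-isotopy theorem finish the argument. Your extra remark that $\psi$ must preserve orientation is correct and harmless, and you should simply delete the abandoned first attempt $H_s(x)=[f(x),s]$ from the write-up.
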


\begin{proof}
The homotopy $H_s(x)=[x,s]$, $s\in[0,1]$, goes from $j$ to $j\circ f$; iterating, $j\circ f^n\simeq j$ for all $n$.  Conversely, $M_f$ is aspherical, so free homotopy classes of maps $S\to M_f$ correspond to $\Gamma$--conjugacy classes of homomorphisms $G\to\Gamma$.  If $j\circ\psi\simeq j$, then $\iota\circ\psi_*=c_\delta\circ\iota$ for some $\delta\in\Gamma$, where $\psi_*$ is defined up to an inner automorphism of $G$.  Writing $\delta=\gamma t^n$ with $\gamma\in G$, the outer class of $\psi_*$ equals that of $f_*^n$.  By the Dehn--Nielsen--Baer theorem and the fact that homotopic homeomorphisms of a closed surface are isotopic \cite{FM}, $\psi$ is isotopic to $f^n$.
\end{proof}

\section{Harmonic representatives}

Let $\Teich=\Teich(S)$ be the Teichm\"uller space of marked conformal structures on $S$.  Each $X\in\Teich$ is represented by its hyperbolic metric $\sigma_X$, so the preceding section applies.  For a mapping class represented by a diffeomorphism $\varphi$ we write $\varphi^*X$ for the pulled-back marked structure; it depends only on the isotopy class of $\varphi$.  Since $f$ need not be smooth at the singularities of its foliations, we fix a diffeomorphism $\phi$ isotopic to $f$ and set $f^*X:=\phi^*X$.

\begin{theorem}[Eells--Sampson \cite{ES}, Hartman \cite{Hartman}]
\label{thm:ESHartman}
For every $X\in\Teich$ there is a unique harmonic map $h_X:(S,\sigma_X)\to M_f$ freely homotopic to $j$.
\end{theorem}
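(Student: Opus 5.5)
The plan is to deduce existence from Eells--Sampson and uniqueness from Hartman, checking that both sets of hypotheses hold here. The target $M_f$ is a closed hyperbolic $3$--manifold, so it is compact with sectional curvature $-1\le 0$. The domain $(S,\sigma_X)$ is a closed Riemannian surface.

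For existence, Eells--Sampson says that every continuous map from a compact Riemannian manifold into a compact manifold of nonpositive sectional curvature is freely homotopic to a harmonic map. I would first replace $j$ by a smooth map in its homotopy class; $j$ is already smooth once a smooth structure compatible with the mapping torus is fixed. Then I would run the harmonic map heat flow starting from $j$. The flow exists for all time and subconverges to a harmonic map $h_X\simeq j$. Since the energy is conformally invariant in dimension two, the statement depends only on the conformal class $X$, and using $\sigma_X$ is just a normalization.

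For uniqueness, Hartman's theorem says the following. Suppose two harmonic maps $h_0,h_1$ into a negatively curved compact target are homotopic. Join them by the geodesic homotopy $h_s(x)=\exp_{h_0(x)}(s\,V(x))$, where $V$ is determined by the lift to $\Hh^3$. Then either $h_0=h_1$, or the image of $h_0$ is contained in a closed geodesic and the homotopy rotates along it. To run the argument, I would lift both maps to $\rho$--equivariant maps $\tilde h_0,\tilde h_1:\Hh^2\to\Hh^3$, which is possible after conjugating, as in Proposition~\ref{prop:anyrepresentative}. The function $x\mapsto d(\tilde h_0(x),\tilde h_1(x))^2$ is $G$--invariant by Lemma~\ref{lem:equivariantbounded}, so it descends to $S$. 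It is subharmonic by the convexity of distance in $\Hh^3$. On closed $S$ it is therefore constant, and the equality case of the convexity forces the two maps to be parallel along geodesics.

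The main obstacle is excluding the degenerate case, and I would handle it algebraically. If $h_0$ mapped onto a closed geodesic, then $(h_0)_*=\iota$ would have image in a cyclic subgroup of $\Gamma$. That is impossible, because $\iota$ is injective and $G=\pi_1(S)$ is not cyclic for $g\ge2$. Likewise, the parallel translation in Hartman's rigidity can be nonzero only if $\rho(G)$ preserves a geodesic in $\Hh^3$. But $G$ has limit set $S^2$, as noted after Theorem~\ref{thm:CT}, so it preserves no geodesic. Hence the constant distance is $0$ and $h_0=h_1$.
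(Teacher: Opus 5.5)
Your proposal is correct and follows the paper's proof. Existence comes from Eells--Sampson, using that $M_f$ is compact with curvature $-1$; uniqueness comes from Hartman's theorem, with the closed-geodesic (or constant) case excluded because a map homotopic to $j$ induces an injective homomorphism onto a conjugate of the non-cyclic group $G$. Your sketch of Hartman's argument via the $\rho$--equivariant lifts and the subharmonic $G$--invariant function $d(\widetilde h_0,\widetilde h_1)^2$ only unpacks the cited theorem, and your limit-set argument is an equivalent way of ruling out the degenerate case.
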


\begin{proof}
Existence follows from the Eells--Sampson theorem, since $S$ and $M_f$ are compact and $M_f$ has sectional curvature $-1$.  By Hartman's theorem, two homotopic harmonic maps into a compact manifold of negative sectional curvature coincide unless their image is contained in a closed geodesic or is a point.  A map with image in a closed geodesic induces a homomorphism with cyclic image, whereas maps homotopic to $j$ induce a homomorphism with image conjugate to the nonabelian group $G$.
\end{proof}

Harmonicity of $h_X$ depends only on the conformal class of $\sigma_X$, since the energy of a map from a surface is conformally invariant.

\begin{theorem}
\label{thm:harmonicCT}
For every $X\in\Teich$, the $\rho$--equivariant lift
\(\widetilde h_X:\Hh^2\to\Hh^3\)
of $h_X$ extends continuously to
\(\widehat h_X:\Hh^2\cup S^1\to\Hh^3\cup S^2\), and
\(\widehat h_X|_{S^1}=\partial\iota\).
In particular
\(\widehat h_X(S^1)=S^2\).
\end{theorem}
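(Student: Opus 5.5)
The plan is to reduce the statement to Proposition~\ref{prop:anyrepresentative}, which already handles every continuous map homotopic to $j$. By Theorem~\ref{thm:ESHartman}, $h_X$ is a continuous (indeed smooth) map $(S,\sigma_X)\to M_f$ freely homotopic to $j$. The marked hyperbolic structure $X$ identifies $\widetilde S$ with $\Hh^2$ and gives the deck representation $\rho_X$. First I fix a free homotopy $H$ from $j$ to $h_X$. Then I lift $H$ to $\widetilde H:\Hh^2\times[0,1]\to\Hh^3$ starting at the fixed lift $\widetilde j$. The final map $\widetilde h_X:=\widetilde H_1$ is the $\rho$--equivariant lift referred to in the statement. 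The uniqueness-of-lifts argument in the proof of Proposition~\ref{prop:anyrepresentative} shows that it satisfies $\widetilde h_X\circ\rho_X(\gamma)=\rho(\gamma)\circ\widetilde h_X$.

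Next I apply Proposition~\ref{prop:anyrepresentative} with $a=h_X$. Concretely, Lemma~\ref{lem:equivariantbounded}, applied with $K$ a closed fundamental polygon for $\rho_X(G)$ in $\Hh^2$, gives $\sup_z d_{\Hh^3}(\widetilde h_X(z),\widetilde j(z))<\infty$. Lemma~\ref{lem:bounded}, combined with the Cannon--Thurston extension $\widehat j$ of Theorem~\ref{thm:CT}, then yields a unique continuous extension $\widehat h_X$ with $\widehat h_X|_{S^1}=\widehat j|_{S^1}=\partial\iota$. Surjectivity onto $S^2$ is then immediate from Theorem~\ref{thm:CT}.

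The only point needing care is the boundary identification. The boundary circle $S^1=\partial\Hh^2$ here is the one coming from $X$, whereas Theorem~\ref{thm:CT} is stated for a fixed identification. By the convention of Section~1, both are read as $\partial G$ via the orbit map of $\rho_X$. Moreover, $\widetilde j$ was defined relative to the same $\Hh^2=\widetilde S$ given by $X$, so Theorem~\ref{thm:CT} applies to this $\widetilde j$ directly; this is the step I would check first.

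If $\widetilde j$ were instead only given relative to a reference structure $X_0$, I would compose it with the lift of the identity $\Hh^2_X\to\Hh^2_{X_0}$. That lift is an equivariant quasi-isometry, so it extends to the boundary homeomorphism that the paper uses to identify $\partial\Hh^2_X$ with $\partial G$. The composite is still a $\rho$--equivariant lift of a map homotopic to $j$, so the argument above goes through with the boundary map unchanged under the identification.
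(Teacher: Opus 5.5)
Your proposal is correct and is the paper's proof: the paper simply applies Proposition~\ref{prop:anyrepresentative} with $a=h_X$, and your steps (lifting the homotopy, then Lemma~\ref{lem:equivariantbounded}, Lemma~\ref{lem:bounded} and Theorem~\ref{thm:CT}) just unpack that proposition. The homotopy-lifting step and the discussion of $\widetilde j$ are harmless but not needed for the conclusion, since the first sentence of Proposition~\ref{prop:anyrepresentative} already applies to any continuous $\rho$--equivariant map, and $\widetilde j$ is by definition a lift on the $\Hh^2$ determined by $X$.
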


\begin{proof}
Apply Proposition~\ref{prop:anyrepresentative} to $a=h_X$.
\end{proof}

\begin{corollary}
\label{cor:Xindependent}
Let $X,Y\in\Teich$.  Under the identifications $\partial\Hh^2_X\cong\partial G\cong\partial\Hh^2_Y$ of Section~1, the boundary maps of $\widetilde h_X$ and $\widetilde h_Y$ coincide.
\end{corollary}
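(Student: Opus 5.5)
The plan is to reduce everything to Theorem~\ref{thm:harmonicCT} together with a careful accounting of what the identification $\partial\Hh^2_X\cong\partial G$ means. By Theorem~\ref{thm:harmonicCT}, the boundary map of $\widetilde h_X$ is $\partial\iota$ regarded as a map on $\partial\Hh^2_X$. The same theorem gives $\partial\iota$ on $\partial\Hh^2_Y$ for $\widetilde h_Y$. So the real content is that ``$\partial\iota$'' is one well-defined map on $\partial G$, independent of the auxiliary structure used to define the lift $\widetilde j$. Once that is settled, the corollary follows.

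First I would fix a $G$--equivariant quasi-isometry $\Phi:\Hh^2_X\to\Hh^2_Y$, namely the lift of the identity of $S$. Section~1 says its boundary extension $\partial\Phi$ realizes the identification $\partial\Hh^2_X\cong\partial\Hh^2_Y$. Let $\widetilde j_Y$ be the lift of $j$ used for $Y$, and set $u=\widetilde j_Y\circ\Phi:\Hh^2_X\to\Hh^3$. This map is continuous and $\rho$--equivariant with respect to $\rho_X$, provided both lifts are chosen compatibly, i.e.\ with the same $\rho$ on $G$ (the lift of $j$ is the same map on $\widetilde S$ in both cases). By Proposition~\ref{prop:anyrepresentative}, $u$ extends continuously to $\Hh^2_X\cup S^1$ with boundary map $\partial\iota_X$.

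Next I would compute the boundary map of $u$ a second way. Since $\Phi$ extends continuously to $\Hh^2_X\cup S^1\to\Hh^2_Y\cup S^1$ with boundary $\partial\Phi$, and $\widetilde j_Y$ extends by Theorem~\ref{thm:CT}, the composite extends with boundary map $\partial\iota_Y\circ\partial\Phi$. By uniqueness of continuous extensions (density of $\Hh^2_X$ and the Hausdorff target, as in Lemma~\ref{lem:bounded}), we get
\[
\partial\iota_X=\partial\iota_Y\circ\partial\Phi.
\]
This is exactly the statement that $\partial\iota$ is one map on $\partial G$. Combining it with Theorem~\ref{thm:harmonicCT} for both $X$ and $Y$ gives
\[
\partial\widetilde h_X=\partial\iota_X=\partial\iota_Y\circ\partial\Phi=\partial\widetilde h_Y\circ\partial\Phi,
\]
which is the claim.

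The main obstacle is bookkeeping rather than analysis. The $\rho$--equivariant lifts must use the same homomorphism $\rho:G\to\Gamma$ for $X$ and $Y$. Otherwise, as in Remark~\ref{rem:otherlifts}, one boundary map may be postcomposed by some $\delta\in\Gamma$. I would handle this by choosing the lifts of $j$ for $X$ and $Y$ to be the same map on $\widetilde S$, composed with the respective developing identifications $\widetilde S\cong\Hh^2_X$ and $\widetilde S\cong\Hh^2_Y$. Then $\Phi$ is the induced identity on $\widetilde S$, and both constructions produce the same $\rho$.
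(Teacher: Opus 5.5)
Your proof is correct and follows the paper's route, where the corollary is just Theorem~\ref{thm:harmonicCT} applied to $X$ and to $Y$ with $\partial\iota$ regarded as a single map on $\partial G$; your identity $\partial\iota_X=\partial\iota_Y\circ\partial\Phi$ spells out the independence of the auxiliary structure that the paper leaves implicit in its Section~1 convention and in Mitra's intrinsic formulation. With your compatible choice of lifts, $\widetilde j_Y\circ\Phi=\widetilde j_X$ holds exactly (for your map $\Phi$, the lift of the identity), so in that step you can use Theorem~\ref{thm:CT} directly instead of Proposition~\ref{prop:anyrepresentative}.
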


\subsection{The Hopf differential}

For a smooth map $h:X\to M_f$ from a Riemann surface, the Hopf differential is the $(2,0)$--part of the pullback metric,
\[
\Hopf(h)=(h^*g_{M_f})^{2,0}=\langle h_z,h_z\rangle\,dz^2
\]
in a local complex coordinate $z=x+iy$, where $\langle\cdot,\cdot\rangle$ is extended complex-bilinearly.  If $h$ is harmonic, then $\Hopf(h)$ is holomorphic, and $\Hopf(h)\equiv0$ if and only if $h$ is weakly conformal.  A nonconstant weakly conformal harmonic map from a surface is a branched minimal immersion \cite{GOR}.  For $X\in\Teich$ we write
\[
\Phi_X:=\Hopf(h_X)\in Q(X),
\]
where $Q(X)$ is the space of holomorphic quadratic differentials on $X$.  Since $h_X$ is nonconstant, $h_X$ is a branched minimal immersion if and only if $\Phi_X=0$.

\section{Least-area fibers}

Let
\[
A_0=\inf\{\Area(u):u:S\to M_f \text{ smooth},\ u\simeq j\},
\]
where $\Area(u)$ is the parametrized area, counted with multiplicity.

\begin{theorem}
\label{thm:leastarea}
There is a smooth embedded surface $\Sigma_{\min}\subset M_f$, isotopic to the fiber $j(S)$, with $\Area(\Sigma_{\min})=A_0$.  In particular $\Sigma_{\min}$ is a stable minimal surface.
\end{theorem}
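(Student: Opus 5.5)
The plan is to invoke the existence and embeddedness theory for least-area incompressible surfaces in closed $3$--manifolds: Schoen--Yau and Sacks--Uhlenbeck for existence, and Freedman--Hass--Scott for embeddedness.

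\emph{Step 1: existence of a minimizer.} The fiber $j(S)$ is an embedded two-sided surface of genus $g\ge2$ and is $\pi_1$--injective, since $\iota$ is injective. Because $M_f$ is closed with negative curvature (hence $\pi_2(M_f)=0$), the theorem of Schoen--Yau (or Sacks--Uhlenbeck) applies to the homotopy class of $j$. It yields a branched minimal immersion $u:S\to M_f$ homotopic to $j$ with $\Area(u)=A_0$. The argument minimizes energy simultaneously over maps and conformal structures; incompressibility rules out degeneration of the conformal structure and the bubbling of spheres. Since $\pi_1$--injectivity is a homotopy invariant, Osserman--Gulliver's argument shows the minimizer has no interior branch points. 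Hence $u$ is an immersion.

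\emph{Step 2: embeddedness and isotopy.} Since $j$ is an embedding, Freedman--Hass--Scott shows that a least-area map in the homotopy class of an embedded $\pi_1$--injective two-sided surface in an irreducible $3$--manifold is either an embedding or a double cover of an embedded one-sided surface. I expect this to be the main obstacle, together with checking its hypotheses carefully.

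\begin{itemize}
\item $M_f$ is irreducible, since its universal cover is $\Hh^3$.
\item The one-sided alternative must be excluded. A double cover onto a one-sided surface $P$ would induce $G$ as an index-two subgroup of $\pi_1(P)\subset\Gamma$. Since the fiber is nonseparating (it is dual to the class of the fibration in $H^1(M_f;\Z)$), the homotopy class of $j$ has nonzero homological intersection with a transverse loop $t$, whose image in $\Z$ generates. A map that double covers $P$ represents twice the class of $P$ in $H_2(M_f;\Z/2)$, that is, zero mod $2$. But $j$ is nonzero mod $2$ because $t$ meets it once. This contradiction gives the embedding.
\end{itemize}

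Let $\Sigma_{\min}=u(S)$. It is an embedded incompressible surface homotopic to the fiber. By Waldhausen's theorem, homotopic incompressible embedded surfaces in a Haken manifold are isotopic. Therefore $\Sigma_{\min}$ is isotopic to $j(S)$.

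\emph{Step 3: stability and smoothness.} Smoothness follows from regularity of branched minimal immersions, now unbranched. $\Sigma_{\min}$ minimizes area among all smooth maps homotopic to $j$, and in particular among all normal variations. So the second variation of area is nonnegative, which means $\Sigma_{\min}$ is a stable minimal surface. The area identity $\Area(\Sigma_{\min})=A_0$ holds because $u$ is an embedding, so multiplicity is one.
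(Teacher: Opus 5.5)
Your proof is correct and follows the paper's outline: Schoen--Yau/Sacks--Uhlenbeck for existence of a least-area immersion, the Freedman--Hass--Scott dichotomy, Waldhausen for the isotopy, and global minimality for stability. The one genuine difference is how you exclude the one-sided alternative.

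The paper argues group-theoretically. The image of $\pi_1(P)$ would contain the normal subgroup $G$ with index two, so its image in $\Gamma/G\cong\Z$ would be a nontrivial finite subgroup, which is impossible because $\Z$ is torsion-free. You argue homologically instead. A double covering of $P$ pushes the $\Z/2$ fundamental class of $S$ to $2[P]=0$ in $H_2(M_f;\Z/2)$. On the other hand, $j_*[S]\neq 0$ there, because a loop transverse to the fibration meets $j(S)$ exactly once. Since homotopic maps induce the same map on homology, this is a contradiction.

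Your version needs no normality or index bookkeeping. It applies verbatim to any two-sided incompressible embedded surface that is nonzero in $H_2(M;\Z/2)$. The paper's version is purely algebraic: it uses only $G\triangleleft\Gamma$ with torsion-free quotient, and it avoids constructing a dual loop and using mod-$2$ intersection theory. Both are complete for the fibered case.

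One small correction: you justify the absence of branch points ``since $\pi_1$--injectivity is a homotopy invariant.'' That is a non sequitur. Unbranchedness comes from the least-area property of the minimizer in a $3$--manifold (Osserman, Gulliver, Alt), not from $\pi_1$--injectivity. This is harmless, since the results you cite deliver an immersion directly.
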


\begin{proof}
Since $j_*$ is injective, the least-area existence theorem for incompressible maps gives a smooth least-area immersion in this homotopy class; see Schoen--Yau \cite{SY}, Sacks--Uhlenbeck \cite{SU}, and the formulation in Hass--Scott \cite{HS}.  Freedman--Hass--Scott \cite{FHS} show that, because the class contains the two-sided embedding $j$, a least-area immersion is either an embedding or a double cover of an embedded one-sided surface $P$.  The manifold $M_f$ is orientable because $S$ is orientable and $f$ is orientation preserving.  Hence a one-sided surface $P$ has a nontrivial orientation double cover.  The second case cannot occur.  Indeed, the image of $\pi_1(P)$ in $\Gamma$ would contain $G$ with index two.  Since $G\triangleleft\Gamma$, its image in $\Gamma/G\cong\Z$ would therefore be finite, hence trivial; thus $\pi_1(P)$ would be contained in $G$, contradicting $[\pi_1(P):G]=2$.  Hence the least-area immersion is an embedding.  It is incompressible and homotopic to the fiber; since $M_f$ is Haken, Waldhausen's theorem \cite{Wald} implies that it is isotopic to the fiber.  A global area minimizer in its homotopy class is minimal, and its second variation is nonnegative, so it is stable.
\end{proof}

We call such a surface a \emph{least-area fiber}.  Choose a diffeomorphism $u_0:S\to\Sigma_{\min}$ with $u_0\simeq j$, and let $X_{\min}\in\Teich$ be the marked conformal structure of $u_0^*g_{M_f}$.  By Lemma~\ref{lem:markings}, another such choice changes $X_{\min}$ to $(f^n)^*X_{\min}$ for some $n\in\Z$.

\begin{theorem}
\label{thm:minimalPeano}
The $\rho$--equivariant lift $F:\Hh^2\to\Hh^3$ of $u_0$, with $\Hh^2$ the universal cover of $X_{\min}$, has the following properties:
\begin{enumerate}[label=\textup{(\roman*)}]
\item $F$ is a conformal harmonic embedding, and $F(\Hh^2)$ is a connected component of the preimage of $\Sigma_{\min}$ in $\Hh^3$;
\item $F(\Hh^2)$ is complete in the induced metric and properly embedded in $\Hh^3$;
\item $F$ extends continuously to
\(\widehat F:\Hh^2\cup S^1\to\Hh^3\cup S^2\);
\item $\widehat F|_{S^1}=\partial\iota$; in particular $\widehat F(S^1)=S^2$.
\end{enumerate}
\end{theorem}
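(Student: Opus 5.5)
The plan is to prove the four items in order, with (iii) and (iv) following quickly from Proposition~\ref{prop:anyrepresentative} once (i) and (ii) are in place.

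\emph{Item (i).} The map $u_0:S\to\Sigma_{\min}$ is a diffeomorphism onto a minimal surface. By definition of $X_{\min}$, it is conformal from $X_{\min}$ to $\Sigma_{\min}$ with the induced metric. A conformal immersion of a surface is harmonic if and only if its image is minimal, so $u_0$ is a conformal harmonic embedding. Lifting to universal covers, $F$ is again conformal and harmonic, because both properties are local and the covering maps are local isometries.

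Next we show $F$ is injective with image a full component of $\pi^{-1}(\Sigma_{\min})$, where $\pi:\Hh^3\to M_f$ is the covering. Let $\widetilde\Sigma$ be the component of $\pi^{-1}(\Sigma_{\min})$ containing $F(\Hh^2)$. Since $u_0\simeq j$ and $j_*$ is injective, $\pi_1(\Sigma_{\min})\to\Gamma$ is injective. Hence $\widetilde\Sigma\to\Sigma_{\min}$ is the universal covering, and its stabilizer in $\Gamma$ is $\rho(G)=G$. The map $F$ is a lift of the diffeomorphism $u_0$ between universal covers, equivariant with respect to the isomorphism $\rho$. It is therefore a diffeomorphism $\Hh^2\to\widetilde\Sigma$, and in particular an embedding.

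\emph{Item (ii).} Completeness holds because the induced metric on $\widetilde\Sigma$ is the lift of a metric on the compact surface $\Sigma_{\min}$. Properness is the one point that needs an argument. Since $\Sigma_{\min}$ is a compact embedded submanifold, $\pi^{-1}(\Sigma_{\min})$ is a closed subset of $\Hh^3$. It is also a properly embedded submanifold, since $\pi$ is a local diffeomorphism and the preimage is locally finite by the discreteness of $\Gamma$. Its components are therefore closed, and each is embedded with the subspace topology agreeing with the manifold topology. Concretely, given a compact $B\subset\Hh^3$, the set $F^{-1}(B)$ is covered by finitely many $\rho_X(G)$-translates of a fundamental polygon $K$. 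Only finitely many $\gamma\in\Gamma$ satisfy $\gamma\,\pi^{-1}$-lift$(u_0(K))\cap B\neq\emptyset$, because $\Gamma$ acts properly discontinuously. Hence $F^{-1}(B)$ is compact.

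\emph{Items (iii) and (iv).} The map $F$ is a continuous $\rho$-equivariant lift of $u_0$, and $u_0\simeq j$. Proposition~\ref{prop:anyrepresentative} applies directly. It gives the continuous extension $\widehat F$ with $\widehat F|_{S^1}=\partial\iota$, and surjectivity follows from Theorem~\ref{thm:CT}.

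The one subtlety is the identification $\partial\Hh^2_{X_{\min}}\cong\partial G$, which holds by the conventions of Section~1. We must also make sure the lift $F$ is chosen as in Proposition~\ref{prop:anyrepresentative}, obtained by lifting a homotopy from $j$ with initial map $\widetilde j$. Otherwise the boundary map is only $\delta\circ\partial\iota$, by Remark~\ref{rem:otherlifts}. The main care is thus in (ii), together with this normalization of the lift.
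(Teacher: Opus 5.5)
Your proposal is correct and follows essentially the same route as the paper. As there, the trivial kernel of $\pi_1(\Sigma_{\min})\to\Gamma$ makes $\widetilde\Sigma$ a universal cover, so $F$ is a diffeomorphism onto it. Completeness and properness come from covering the compact embedded $\Sigma_{\min}$; your explicit proper-discontinuity argument only spells out the paper's remark that the components of the preimage are closed. Items (iii)--(iv) then follow from Proposition~\ref{prop:anyrepresentative} and Theorem~\ref{thm:CT}. Your caveat about normalizing the lift is harmless but unnecessary: the first statement of Proposition~\ref{prop:anyrepresentative} applies to any continuous $\rho$--equivariant map, and $F$ is one by hypothesis.
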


\begin{proof}
Let $\widetilde\Sigma$ be the component of the preimage of $\Sigma_{\min}$ containing $F(\Hh^2)$.  The covering $\widetilde\Sigma\to\Sigma_{\min}$ corresponds to the kernel of $\pi_1(\Sigma_{\min})\to\pi_1(M_f)$, which is trivial; so $\widetilde\Sigma$ is simply connected, and $F:\Hh^2\to\widetilde\Sigma$ is a lift of the diffeomorphism $u_0$ between universal covers, hence a diffeomorphism.  Since $u_0$ is conformal and minimal, $F$ is conformal and harmonic.  As $F$ is a diffeomorphism onto the embedded component $\widetilde\Sigma$, it is an embedding.  The preimage of the closed set $\Sigma_{\min}$ is a closed embedded surface in $\Hh^3$, and its components are closed; so $\widetilde\Sigma$ is properly embedded.  It is complete in its induced metric because it covers the compact surface $\Sigma_{\min}$.  Items (iii) and (iv) follow from Proposition~\ref{prop:anyrepresentative} and Theorem~\ref{thm:CT}.
\end{proof}

By Remark~\ref{rem:otherlifts}, the other components of the preimage of $\Sigma_{\min}$ are the translates $t^nF(\Hh^2)$, with boundary maps $t^n\circ\partial\iota$.

In the asymptotic Plateau problem one prescribes a Jordan curve in $S^2$ and seeks a complete minimal disk in $\Hh^3$ with that curve as its asymptotic boundary \cite{Anderson}.  In Theorem~\ref{thm:minimalPeano} the boundary map of the minimal plane is instead a Peano curve onto all of $S^2$.

\begin{corollary}
\label{cor:Hopfzero}
With $X_{\min}$ as above, $h_{X_{\min}}=u_0$.  Consequently
\(\Phi_{X_{\min}}=0\).
\end{corollary}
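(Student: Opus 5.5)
The plan is to show that $u_0:(S,\sigma_{X_{\min}})\to M_f$ is harmonic and freely homotopic to $j$. The uniqueness part of Theorem~\ref{thm:ESHartman} then forces $h_{X_{\min}}=u_0$.

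\emph{Homotopy class.} By construction $u_0\simeq j$, so $u_0$ lies in the homotopy class in which $h_{X_{\min}}$ is defined.

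\emph{Harmonicity.} By definition $X_{\min}$ is the conformal class of the induced metric $u_0^*g_{M_f}$. Hence $u_0:(S,u_0^*g_{M_f})\to M_f$ is an isometric immersion, in particular conformal. Its image $\Sigma_{\min}$ is minimal by Theorem~\ref{thm:leastarea}. A conformal immersion of a surface is harmonic exactly when its image is minimal: in isothermal coordinates $\Delta u_0$ is a multiple of the mean curvature vector. So $u_0$ is harmonic for the metric $u_0^*g_{M_f}$. Harmonicity of maps from surfaces depends only on the conformal class, as noted after Theorem~\ref{thm:ESHartman}. The metric $u_0^*g_{M_f}$ is conformal to the hyperbolic metric $\sigma_{X_{\min}}$ by uniformization, so $u_0$ is harmonic as a map from $(S,\sigma_{X_{\min}})$. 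Equivalently, one can cite Theorem~\ref{thm:minimalPeano}(i): the lift $F$ is conformal and harmonic, and harmonicity descends to the quotient.

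\emph{Conclusion.} Theorem~\ref{thm:ESHartman} gives a unique harmonic map from $(S,\sigma_{X_{\min}})$ in the free homotopy class of $j$, so $u_0=h_{X_{\min}}$. Because $u_0$ is conformal, $\langle (u_0)_z,(u_0)_z\rangle=0$ in any local holomorphic coordinate, so $\Phi_{X_{\min}}=\Hopf(u_0)=0$.

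The only point needing care is the marking. $X_{\min}$ is defined via $u_0$ itself, so no change of marking enters, and replacing $u_0$ by another choice is covered by Lemma~\ref{lem:markings}. The argument is otherwise routine; the step that does the real work is the passage from conformal invariance to applying Hartman's uniqueness theorem with the hyperbolic representative of the conformal class.
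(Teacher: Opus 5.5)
Your proposal is correct and follows the paper's own proof: conformal plus minimal gives harmonic, homotopy to $j$ together with the uniqueness in Theorem~\ref{thm:ESHartman} gives $u_0=h_{X_{\min}}$, and conformality makes $\Hopf(u_0)$ vanish. Your added remarks on conformal invariance and on passing to the uniformizing metric $\sigma_{X_{\min}}$ only spell out steps the paper leaves implicit.
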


\begin{proof}
The map $u_0:(S,X_{\min})\to M_f$ is conformal and minimal, hence harmonic, and it is homotopic to $j$.  Theorem~\ref{thm:ESHartman} gives $u_0=h_{X_{\min}}$, and $\Hopf(u_0)=0$ because $u_0$ is conformal.
\end{proof}

The uniqueness of the least-area fiber is not asserted.  Hass \cite{Hass} showed that some hyperbolic $3$--manifolds fibering over the circle admit no fibration by minimal surfaces.

\section{Stability, curvature, distortion, energy, and monodromy}

\subsection{Area and curvature of a stable minimal fiber}

Let $\Sigma\subset M_f$ be a closed embedded minimal surface isotopic to the fiber, with second fundamental form $A$, principal curvatures $\pm k$ ($k\geq0$), and Gauss curvature $K_\Sigma$.  By the Gauss equation,
\[
|A|^2=2k^2,
\qquad
K_\Sigma=-1-k^2=-1-\tfrac12|A|^2 .
\]

\begin{proposition}
\label{prop:area-curvature}
If $\Sigma$ is stable, then
\[
2\pi(g-1)< \Area(\Sigma)<4\pi(g-1),
\qquad
0<\int_{\Sigma}|A|^2\,dA=8\pi(g-1)-2\Area(\Sigma)<4\pi(g-1),
\]
and
\[
-2<\frac{1}{\Area(\Sigma)}\int_{\Sigma}K_{\Sigma}\,dA<-1 .
\]
The upper bound $\Area(\Sigma)<4\pi(g-1)$ holds without the stability assumption.
\end{proposition}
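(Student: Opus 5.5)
The plan is to combine Gauss--Bonnet with the Gauss equation for the upper bounds, and to use the stability inequality with the constant test function for the lower bound.

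\textbf{Gauss--Bonnet.} Since $\Sigma$ is isotopic to the fiber, it has genus $g$, so
\[
\int_\Sigma K_\Sigma\,dA=2\pi(2-2g)=-4\pi(g-1).
\]
Substituting the Gauss equation $K_\Sigma=-1-\tfrac12|A|^2$ gives
\[
\Area(\Sigma)+\tfrac12\int_\Sigma|A|^2\,dA=4\pi(g-1),
\]
which is the identity $\int_\Sigma|A|^2\,dA=8\pi(g-1)-2\Area(\Sigma)$.

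\textbf{Upper bound on area (no stability needed).} The bound $\Area(\Sigma)<4\pi(g-1)$ is equivalent to $\int_\Sigma|A|^2\,dA>0$, that is, $\Sigma$ is not totally geodesic. Suppose instead that $A\equiv0$. Then $\Sigma$ is a closed totally geodesic surface, and its lift to $\Hh^3$ is a totally geodesic plane. By Theorem~\ref{thm:minimalPeano}, whose argument applies verbatim to any embedded surface isotopic to the fiber via Proposition~\ref{prop:anyrepresentative}, this lift has boundary map $\partial\iota$ with image $S^2$. A totally geodesic plane has a round circle as its limit set, so this is a contradiction. Alternatively, $G$ would be quasi-Fuchsian (indeed Fuchsian) and hence have limit set a circle, contradicting $\Lambda(G)=S^2$. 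This step does not use stability.

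\textbf{Lower bound on area (uses stability).} The Jacobi operator is $L=\Delta+|A|^2+\mathrm{Ric}(\nu,\nu)=\Delta+|A|^2-2$. Since $\Sigma$ is two-sided, with a unit normal $\nu$ (the fiber is two-sided in the orientable $M_f$), stability gives $\int_\Sigma(|\nabla\phi|^2-(|A|^2-2)\phi^2)\,dA\ge0$ for all $\phi$. Taking $\phi\equiv1$ yields
\[
\int_\Sigma|A|^2\,dA\le2\Area(\Sigma).
\]
Combined with the Gauss--Bonnet identity, this gives $8\pi(g-1)-2\Area(\Sigma)\le2\Area(\Sigma)$, so $\Area(\Sigma)\ge2\pi(g-1)$ and $\int_\Sigma|A|^2\,dA\le4\pi(g-1)$.

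\textbf{Main obstacle: strictness of the lower bound.} If equality held, the constant function $1$ would minimize the stability quadratic form with value $0$. It would then be a Jacobi field with $L1=0$, so $|A|^2\equiv2$ and $k\equiv1$. We must rule this out. My first try is a Codazzi/Simons-type argument. A minimal surface in a space form with constant $|A|\neq0$ has a holomorphic Hopf differential (the $(2,0)$ part of $A$) of constant nonzero norm. Such a differential is then nowhere vanishing, which on a closed surface of genus $g\ge2$ contradicts the fact that a nonzero holomorphic quadratic differential has $4g-4>0$ zeros. This gives the strict inequalities $2\pi(g-1)<\Area(\Sigma)$ and $\int_\Sigma|A|^2\,dA<4\pi(g-1)$.

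\textbf{Curvature average.} Dividing Gauss--Bonnet by the area gives
\[
\frac{1}{\Area(\Sigma)}\int_\Sigma K_\Sigma\,dA=\frac{-4\pi(g-1)}{\Area(\Sigma)}.
\]
The strict area bounds $2\pi(g-1)<\Area(\Sigma)<4\pi(g-1)$ place this quantity strictly between $-2$ and $-1$.
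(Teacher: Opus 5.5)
Your proof is correct. It matches the paper everywhere except at the one nontrivial step, the strictness of the lower bound $\Area(\Sigma)>2\pi(g-1)$, where you take a different route.

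At that step the paper argues locally. Once $k\equiv1$, the principal curvatures are constant and distinct, so the Codazzi equations in a principal frame force $\omega_{12}=0$. The structure equation $d\omega_{12}=-K_\Sigma\,\omega_1\wedge\omega_2$ then gives $K_\Sigma=0$, contradicting $K_\Sigma=-1-k^2=-2$ from the Gauss equation.

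You argue globally. The $(2,0)$--part $\phi\,dz^2$ of the second fundamental form is holomorphic by Codazzi (Hopf's observation for minimal surfaces in space forms). For the induced metric $\lambda^2|dz|^2$ one has $|A|^2=8|\phi|^2\lambda^{-4}$. So $|A|^2\equiv2$ would give a nowhere-vanishing holomorphic quadratic differential on a closed surface of genus $g\ge2$, which is impossible since such a differential has $4g-4$ zeros.

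Both arguments rest on the Codazzi equation, but they buy different things. The paper's version is purely local and never uses the genus: no piece of a minimal surface in a hyperbolic $3$--manifold can have $k\equiv1$. Yours uses the topology and yields more. The zeros of $\phi\,dz^2$ are exactly the points where $A=0$, so every closed minimal surface of genus $g\ge2$ in $M_f$ satisfies $\min_\Sigma|A|^2=0$, with no stability hypothesis. This sharpens the first inequality of Corollary~\ref{cor:A2range}.
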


\begin{proof}
Integrating the Gauss equation and using Gauss--Bonnet,
\begin{equation}
\label{eq:GB}
\Area(\Sigma)+\frac12\int_{\Sigma}|A|^2\,dA=4\pi(g-1).
\end{equation}
Thus $\Area(\Sigma)\leq4\pi(g-1)$, with equality exactly when $A\equiv0$.  In that case a lift of $\Sigma$ to $\Hh^3$ is a totally geodesic plane invariant under a conjugate of $G$, and the limit set of $G$ would be a round circle.  The limit set of $G$ is $S^2$ (Section~1), so the upper bound is strict.

The Jacobi operator of $\Sigma$ is $L=\Delta+|A|^2+\operatorname{Ric}(N,N)=\Delta+|A|^2-2$.  Stability, applied to the constant test function $1$, gives $\int_\Sigma(|A|^2-2)\,dA\le0$.  Together with \eqref{eq:GB} this gives $\Area(\Sigma)\geq2\pi(g-1)$.  If equality holds, the quadratic form $Q(\varphi)=\int_\Sigma(|\nabla\varphi|^2-(|A|^2-2)\varphi^2)\,dA$, which is nonnegative, vanishes at $\varphi=1$; so $1$ lies in the kernel of $L$, and $|A|^2\equiv2$, that is, $k\equiv1$.  A surface in a hyperbolic $3$--manifold whose principal curvatures are constant and distinct is flat: in a principal frame $e_1,e_2$, the Codazzi equations give $(k_1-k_2)\,\omega_{12}(e_i)=0$ for $i=1,2$, so $\omega_{12}=0$.  The Cartan structure equation $d\omega_{12}=-K_\Sigma\,\omega_1\wedge\omega_2$ then gives $K_\Sigma=0$.  This contradicts $K_\Sigma=-2$.  Hence the lower bound is strict.  The remaining statements follow from \eqref{eq:GB} and $\int_\Sigma K_\Sigma\,dA=-4\pi(g-1)$.
\end{proof}

\subsection{The largest principal curvature}

\begin{proposition}[Uhlenbeck; see also Epstein]
\label{prop:maxcurv}
Let $u:S\to M_f$ be a smooth immersion homotopic to $j$.  Then at some point of $S$ some principal curvature of $u$ has absolute value at least $1$.  In particular every closed minimal surface in $M_f$ homotopic to the fiber satisfies
\[
\max_\Sigma k\geq1,\qquad \max_\Sigma|A|^2\geq2,\qquad \min_\Sigma K_\Sigma\leq-2 .
\]
\end{proposition}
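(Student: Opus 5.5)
We argue by contradiction, using the classical fact that a surface with all principal curvatures of absolute value less than $1$ in $\Hh^3$ is the leaf of an equidistant (normal exponential) foliation and is quasi-Fuchsian. So suppose that every principal curvature of $u$ has absolute value $<1$ everywhere. By compactness of $S$ there is $c<1$ with $|k_i|\le c$ at every point. Lift $u$ to the $\rho$--equivariant immersion $\widetilde u:\Hh^2\to\Hh^3$ of Proposition~\ref{prop:anyrepresentative}. The induced metric on $\Hh^2$ is complete, being the lift of a metric on the compact surface $S$.

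The key step, and the main obstacle, is to show that a complete immersed surface in $\Hh^3$ with principal curvatures bounded by $c<1$ is properly embedded, quasi-isometrically embedded, and has Jordan curve boundary. We plan to use the normal exponential map $E(z,s)=\exp_{\widetilde u(z)}(sN(z))$ on $\Hh^2\times\R$. The Jacobi field computation in constant curvature $-1$ gives
\[
dE=\cosh s\,I-\sinh s\,A
\]
on tangent directions. Its eigenvalues $\cosh s-k_i\sinh s\ge \cosh s-c\sinh s>0$ never vanish, so there are no focal points. The pulled-back metric $ds^2+g_s$ is then complete and has curvature $\le -1$ by comparison (this is Epstein's argument), so $E$ is a local isometry from a complete simply connected manifold. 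Hence $E$ is a covering map, therefore a diffeomorphism onto $\Hh^3$.

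From this diffeomorphism we read off two facts. First, $\widetilde u$ is an embedding whose image is a leaf of a foliation by equidistant surfaces. Second, nearest-point projection onto the level set $s=0$ expands distances by at least $\cosh s-c\sinh s\ge \sqrt{1-c^2}\,e^{|s|}/C$. Consequently geodesics in $\widetilde u(\Hh^2)$ are $(K,L)$-quasigeodesics in $\Hh^3$. Concretely, a curve in the surface of curvature $\le c<1$ is a uniform local quasigeodesic, and the local-to-global principle in $\Hh^3$ upgrades this to a global one. Thus $\widetilde u$ is a quasi-isometric embedding of $\Hh^2$ into $\Hh^3$. If that route proves awkward, we would instead invoke the direct estimate that curves of geodesic curvature $\le c<1$ are uniform quasigeodesics.

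A quasi-isometric embedding $\Hh^2\to\Hh^3$ extends to an injective boundary map $S^1\to S^2$, because quasigeodesic rays to distinct boundary points diverge. By Proposition~\ref{prop:anyrepresentative} this boundary map is $\partial\iota$. But $\partial\iota$ is surjective by Theorem~\ref{thm:CT}, and a continuous injection $S^1\to S^2$ is not onto. This is the contradiction, so at some point some principal curvature has absolute value at least $1$.

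For a minimal surface the principal curvatures are $\pm k$, so we obtain $\max k\ge1$. The remaining two inequalities then follow from $|A|^2=2k^2$ and $K_\Sigma=-1-k^2$.
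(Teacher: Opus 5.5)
Your proposal is correct and follows essentially the same route as the paper. The normal exponential map $E$ is a complete local isometry, hence a diffeomorphism onto $\Hh^3$; the resulting projection makes $\widetilde u$ a quasi-isometric embedding; and its injective boundary map contradicts the Peano curve $\partial\iota$. The paper is tighter in two places: it uses the uniform bound $\cosh s-k_i\sinh s\ge\sqrt{1-k_0^2}$ to get $E^*g_{\Hh^3}\ge ds^2+(1-k_0^2)g_u$, which is what actually justifies completeness, and it reads off $d_{g_u}(x,y)\le(1-k_0^2)^{-1/2}d_{\Hh^3}(\widetilde u(x),\widetilde u(y))$ directly from the Lipschitz projection $\mathrm{pr}_1\circ E^{-1}$, which contracts rather than expands, so your quasigeodesic/local-to-global detour and the curvature-comparison remark are unnecessary.
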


\begin{proof}
Suppose that all principal curvatures of $u$ have absolute value at most $k_0<1$; by compactness this is the negation of the conclusion.  Let $\widetilde u:\Hh^2\to\Hh^3$ be the $\rho$--equivariant lift, with the induced complete metric $g_u$ and unit normal $N$, and let $E:\Hh^2\times\R\to\Hh^3$, $E(x,s)=\exp_{\widetilde u(x)}(sN(x))$.  Jacobi fields in curvature $-1$ give
\[
E^*g_{\Hh^3}=ds^2+g_u\big((\cosh s\,I-\sinh s\,W)\cdot,(\cosh s\,I-\sinh s\,W)\cdot\big),
\]
where $W$ is the shape operator.  The eigenvalues of $\cosh s\,I-\sinh s\,W$ are $\cosh s-k_i\sinh s=\sqrt{1-k_i^2}\,\cosh(s-\operatorname{artanh}k_i)\geq\sqrt{1-k_0^2}$.  Hence
\[
E^*g_{\Hh^3}\geq ds^2+(1-k_0^2)\,g_u .
\]
The right side is a complete metric, so $E^*g_{\Hh^3}$ is complete.  Since $E^*g_{\Hh^3}$ is positive definite, $dE$ is nonsingular everywhere; thus $E$ is a local diffeomorphism, and with the pullback metric it is a local isometry.  By the standard covering theorem for complete local isometries, $E$ is a covering map onto the connected manifold $\Hh^3$; since $\Hh^3$ is simply connected, $E$ is a diffeomorphism.  Let $P=\mathrm{pr}_1\circ E^{-1}:\Hh^3\to\Hh^2$.  By the inequality above, $P$ multiplies lengths of curves by at most $(1-k_0^2)^{-1/2}$, and $P\circ\widetilde u=\mathrm{id}$.  Therefore
\[
d_{g_u}(x,y)\leq(1-k_0^2)^{-1/2}\,d_{\Hh^3}(\widetilde u(x),\widetilde u(y)),
\]
and $d_{\Hh^3}(\widetilde u(x),\widetilde u(y))\le d_{g_u}(x,y)$.  Since $g_u$ is $G$--invariant and $S$ is compact, $g_u$ is bi-Lipschitz to $\sigma_X$ for any $X$.  So $\widetilde u$ is a quasi-isometric embedding, and its boundary extension is injective.  By Proposition~\ref{prop:anyrepresentative} this extension is $\partial\iota$, which is not injective.  For a minimal surface the principal curvatures are $\pm k$, and the remaining inequalities follow from the Gauss equation.
\end{proof}

The argument is the one used by Uhlenbeck \cite{Uh} and Epstein \cite{Epstein} for surfaces with principal curvatures in $(-1,1)$; the statement for least-area fibers is attributed to Uhlenbeck in \cite{FVP}.  Farre and Vargas Pallete \cite{FVP} obtain stronger lower bounds along certain families.  Huang and Lowe \cite{HL} prove a genus-dependent gap theorem: for every $g\ge2$ there is $\varepsilon(g)>0$ such that, in every closed hyperbolic $3$--manifold fibering with fiber genus $g$, every embedded minimal surface isotopic to the fiber has maximum principal curvature greater than $1+\varepsilon(g)$.

\begin{corollary}
\label{cor:A2range}
For a stable minimal surface $\Sigma$ isotopic to the fiber,
\[
\min_\Sigma|A|^2<2\leq\max_\Sigma|A|^2 .
\]
\end{corollary}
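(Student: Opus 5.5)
The upper inequality $2\le\max_\Sigma|A|^2$ is already in Proposition~\ref{prop:maxcurv}. A stable minimal surface isotopic to the fiber is a closed minimal surface homotopic to $j$, so that proposition gives $\max_\Sigma k\ge1$, and therefore $\max_\Sigma|A|^2=2\max_\Sigma k^2\ge2$. Stability is not needed for this half.

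For the lower inequality $\min_\Sigma|A|^2<2$ I will use the stability computation from the proof of Proposition~\ref{prop:area-curvature}. The Jacobi operator is $L=\Delta+|A|^2-2$. Testing stability on the constant function $1$ gives
\[
\int_\Sigma(|A|^2-2)\,dA\le0 .
\]
Suppose, for contradiction, that $|A|^2\ge2$ everywhere on $\Sigma$. Then the integrand $|A|^2-2$ is a continuous nonnegative function with nonpositive integral. Hence it vanishes identically, so $|A|^2\equiv2$ and $k\equiv1$.

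The main point is to rule out $k\equiv1$, and this repeats the rigidity step in Proposition~\ref{prop:area-curvature}. The principal curvatures $\pm1$ are constant and distinct. Working in a principal frame, the Codazzi equations give $\omega_{12}=0$, and the structure equation then gives $K_\Sigma=0$. The Gauss equation instead gives $K_\Sigma=-1-k^2=-2$, which is a contradiction.

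Alternatively, $|A|^2\equiv2$ makes equality hold in \eqref{eq:GB}-based bound, giving $\Area(\Sigma)=2\pi(g-1)$. This is excluded by the strict inequality in Proposition~\ref{prop:area-curvature}. Either way $|A|^2\ge2$ fails somewhere, and since $\Sigma$ is compact the minimum is attained, so $\min_\Sigma|A|^2<2$. Combining the two halves gives the corollary.
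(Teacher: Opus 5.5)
Your proof is correct and is essentially the paper's: the paper gets $\min_\Sigma|A|^2<2$ from the strict bound $\Area(\Sigma)>2\pi(g-1)$ of Proposition~\ref{prop:area-curvature}, which makes the mean value of $|A|^2$ strictly less than $2$. That is exactly your ``alternatively'' step, and your main route simply unpacks the same strictness inline, using stability with the constant test function and then the Codazzi rigidity argument. The upper bound $2\le\max_\Sigma|A|^2$ comes from Proposition~\ref{prop:maxcurv} in both, and, as you note, it does not need stability.
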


\begin{proof}
By Proposition~\ref{prop:area-curvature}, the mean value of $|A|^2$ is $(8\pi(g-1)-2\Area(\Sigma))/\Area(\Sigma)$, which is less than $2$ because $\Area(\Sigma)>2\pi(g-1)$.  The upper inequality is Proposition~\ref{prop:maxcurv}.
\end{proof}

\subsection{Comparison with the uniformizing metric}

\begin{theorem}
\label{thm:strict-contraction}
Let $F:\Hh^2\to\Hh^3$ be the map of Theorem~\ref{thm:minimalPeano}, and let $\sigma$ be the hyperbolic metric on $\Hh^2$, the universal cover of $X_{\min}$.  Then $F^*g_{\Hh^3}<\sigma$ at every point, and there is $c\in(0,1)$ with $F^*g_{\Hh^3}\leq c^2\sigma$.  In particular
\[
d_{\Hh^3}(F(x),F(y))\leq c\,d_{\sigma}(x,y)
\qquad(x,y\in\Hh^2).
\]
\end{theorem}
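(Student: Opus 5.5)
Since $F$ is conformal (Theorem~\ref{thm:minimalPeano}(i)), we can write $F^*g_{\Hh^3}=e^{2w}\sigma$ for a smooth function $w$ on $\Hh^2$. Both $F^*g_{\Hh^3}$ and $\sigma$ are invariant under $\rho_X(G)$, so $w$ descends to a smooth function on the closed surface $S$. The whole statement therefore reduces to proving $w<0$ everywhere. Granting this, compactness of $S$ gives $\max w<0$, and we take $c=e^{\max w}\in(0,1)$. The distance estimate then follows by pushing a $\sigma$-geodesic from $x$ to $y$ forward by $F$: its image has $\Hh^3$-length at most $c\,d_\sigma(x,y)$.

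To prove $w<0$ we compare the curvatures of the two metrics on $S$. For $g=e^{2w}\sigma$, the conformal change formula gives
\[
K_g e^{2w}=K_\sigma-\Delta_\sigma w=-1-\Delta_\sigma w .
\]
By the Gauss equation (Section~5), $K_g=-1-k^2\le -1$. Substituting,
\[
\Delta_\sigma w=-1-K_g e^{2w}=(1+k^2)e^{2w}-1\geq e^{2w}-1 .
\]

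Now apply the maximum principle at a maximum point $p$ of $w$ on $S$, where $\Delta_\sigma w(p)\le0$. This gives $e^{2w(p)}(1+k(p)^2)\le1$, hence $w\le w(p)\le0$ everywhere. This is the Ahlfors--Schwarz lemma argument.

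The main obstacle is upgrading this to strict inequality. If $\max w=0$, then at the maximum point we must also have $k(p)=0$. To rule this out, set $v=-w\ge0$, which satisfies
\[
\Delta_\sigma v=1-(1+k^2)e^{-2v}\le 1-e^{-2v}\le 2v .
\]
So $\Delta v-2v\le 0$ with $v\ge0$. By the strong maximum principle (Hopf) for the operator $\Delta-2$, which has zeroth-order coefficient of the right sign, either $v\equiv0$ or $v>0$ everywhere.

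It remains to exclude $v\equiv0$. In that case $\Delta v=0$ would force $(1+k^2)=1$, i.e.\ $k\equiv0$. Then $A\equiv0$ and $\Sigma_{\min}$ would be totally geodesic, which is excluded in the proof of Proposition~\ref{prop:area-curvature}, since the limit set of $G$ is $S^2$ and not a round circle. A faster alternative: $v\equiv 0$ means $F^*g_{\Hh^3}=\sigma$, so $\Area(\Sigma_{\min})=4\pi(g-1)$, contradicting the strict upper bound in Proposition~\ref{prop:area-curvature}. Either way $w<0$ everywhere, which completes the proof.
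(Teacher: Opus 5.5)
Your proof is correct and is essentially the paper's argument: you write the conformal factor $w$, which descends to $S$, derive $\Delta_\sigma w\ge e^{2w}-1$ from the Gauss equation, and get $w\le 0$ from the Ahlfors--Schwarz maximum-point argument; the strong maximum principle then gives $w\equiv0$ or $w<0$, and $w\equiv 0$ is ruled out by the totally geodesic case of Proposition~\ref{prop:area-curvature}. The only difference is cosmetic: you linearize with the constant coefficient $2$ (via $e^{2w}-1\ge 2w$), whereas the paper uses the variable coefficient $\beta=(e^{2w}-1)/w\ge 0$, and both are valid.
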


\begin{proof}
Write $F^*g_{\Hh^3}=e^{2w}\sigma$, where $w$ is $G$--invariant and descends to $S$.  The curvature of $e^{2w}\sigma$ is $K=e^{-2w}(-1-\Delta_\sigma w)$, and $K=-1-\tfrac12|A|^2\le-1$; hence
\[
\Delta_\sigma w\ \geq\ e^{2w}-1 .
\]
At a maximum point of $w$ on the compact surface $S$ we have $\Delta_\sigma w\le0$, so $w\le0$ everywhere (this is the Ahlfors--Schwarz lemma \cite{Ahlfors} in the present setting).  Write $e^{2w}-1=\beta w$ with $\beta=(e^{2w}-1)/w\geq0$ (and $\beta=2$ where $w=0$).  Then $\Delta_\sigma w-\beta w\geq0$ and $w\le0$; by the strong maximum principle either $w\equiv0$ or $w<0$ everywhere (compare \cite{Minda}).  If $w\equiv0$, then $K\equiv-1$ and $A\equiv0$, which is excluded by Proposition~\ref{prop:area-curvature}.  Hence $w<0$, and $c=e^{\max_S w}<1$.  Integrating along curves gives the distance estimate.
\end{proof}

\subsection{Exponential distortion of the minimal plane}

For $x,y$ in the plane $\widetilde\Sigma=F(\Hh^2)$ let $d_{\widetilde\Sigma}(x,y)$ be the intrinsic distance, and set
\[
\Dist(r)=\sup\{d_{\widetilde\Sigma}(x,y):x,y\in\widetilde\Sigma,\ d_{\Hh^3}(x,y)\leq r\}.
\]

\begin{theorem}
\label{thm:exponential-distortion}
There are constants $C\ge1$ and $0<a\le b$ such that
\[
C^{-1}e^{ar}-C\ \leq\ \Dist(r)\ \leq\ Ce^{br}\qquad(r\geq1).
\]
In particular $\widetilde\Sigma$ is not quasi-isometrically embedded in $\Hh^3$ and is not quasiconvex.
\end{theorem}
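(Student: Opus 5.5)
The upper bound and the lower bound will be proved separately. Throughout, $\widetilde\Sigma=F(\Hh^2)$ is identified with $(\Hh^2,F^*g_{\Hh^3})$. Since $F^*g_{\Hh^3}$ is $G$--invariant and $S$ is compact, the intrinsic distance $d_{\widetilde\Sigma}$ is bi-Lipschitz to $d_\sigma$. Also, $G$ acts cocompactly on $\widetilde\Sigma$ by isometries of both $d_{\widetilde\Sigma}$ and $d_{\Hh^3}$.

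\emph{Upper bound.} The fiber group $G$ is exponentially distorted in $\Gamma$, with at most exponential distortion; I will use this, working with the orbit maps. Fix a basepoint $o=F(z_0)$. Since $\Gamma$ acts cocompactly on $\Hh^3$, the Švarc--Milnor lemma makes $\delta\mapsto\delta o$ a quasi-isometry $\Gamma\to\Hh^3$. Likewise $\gamma\mapsto\rho(\gamma)o$ is a quasi-isometry $G\to\widetilde\Sigma$ for $d_{\widetilde\Sigma}$.

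Now take $x,y\in\widetilde\Sigma$ with $d_{\Hh^3}(x,y)\le r$. Move $x,y$ by bounded amounts to orbit points $\gamma o,\gamma' o$ with $\gamma,\gamma'\in G$. Then $|\gamma^{-1}\gamma'|_\Gamma\le C(r+1)$, so we need to bound $|\eta|_G$ for $\eta\in G$ with $|\eta|_\Gamma\le n$. Write $\eta$ as a word in generators $s_i$ of $G$ together with $t^{\pm1}$. The total $t$-exponent is zero, so $\eta$ is a product of at most $n$ conjugates $t^{k}s t^{-k}$ with $|k|\le n$. Each such conjugate has $G$-length at most $L^{|k|}$, where $L$ bounds the $G$-lengths of $t^{\pm1}s_it^{\mp1}$. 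This gives $|\eta|_G\le nL^n\le Ce^{br}$, which yields the upper bound.

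\emph{Lower bound.} I want pairs of points that are close in $\Hh^3$ but far apart in $\widetilde\Sigma$. Take any nontrivial $\gamma\in G$ and set $\gamma_n=t^n\gamma t^{-n}\in G$, so that $d_{\Hh^3}(o,\gamma_n o)\le 2n\,d(o,to)+d(o,\gamma o)\le C' n$.

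The intrinsic side requires a lower bound on $|\gamma_n|_G$. Conjugation by $t$ acts on $G$ as $f_*$ up to inner automorphisms, and for a pseudo-Anosov $f$ the lengths grow exponentially. Concretely, choose $\gamma$ whose geodesic length $\ell_\sigma(\gamma)$ on $X_{\min}$ is positive. Then $\ell_\sigma(f^n_*\gamma)\ge c\lambda^n$, since $\lambda^{-n}\ell(f^n\gamma)$ tends to $i(\gamma,\mathcal F^s)$ times a constant (Thurston, \cite{FLP}), which is positive for a filling foliation. Translation length is bounded by displacement and is conjugation invariant, so $d_\sigma(z_0,\rho_X(\gamma_n)z_0)\ge c\lambda^n$. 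By the bi-Lipschitz comparison this gives $d_{\widetilde\Sigma}(o,\gamma_no)\ge c''\lambda^n$. Setting $r=C'n$ yields $\Dist(C'n)\ge c''e^{(\log\lambda/C')C'n}$. Intermediate values of $r$ are handled by monotonicity of $\Dist$, adjusting $C$.

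\emph{Consequences and main obstacle.} The non-quasi-isometric and non-quasiconvex conclusions follow from the lower bound. A quasi-isometric embedding would force $\Dist$ to be linear. Quasiconvexity of $\widetilde\Sigma$ would make $G$ quasiconvex in $\Gamma$, hence quasi-isometrically embedded, which is again contradicted. One could alternatively use Theorem~\ref{thm:CT}: such an embedding would give an injective boundary map, whereas $\partial\iota$ is not injective. The main obstacle is the exponential growth $\ell(f^n_*\gamma)\gtrsim\lambda^n$. I would cite it from \cite{FLP}, using that the stable foliation fills, so that $i(\gamma,\mathcal F^s)>0$ for every essential $\gamma$.
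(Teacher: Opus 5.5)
Your proposal is correct and follows essentially the same route as the paper. Both arguments use a Milnor--\v{S}varc reduction to word lengths. Both get the upper bound by rewriting a word of zero $t$--exponent as at most $n$ conjugates $t^{e}st^{-e}$, each of $G$--length at most exponential in $|e|$. Both get the lower bound from the elements $t^n\gamma t^{-n}$ together with the exponential growth of geodesic length under the pseudo-Anosov monodromy. The only difference is cosmetic: you turn that growth into intrinsic distance through translation length on $X_{\min}$ and the bi-Lipschitz comparison of $F^*g_{\Hh^3}$ with $\sigma$, while the paper phrases the same step through conjugacy length versus word length in $G$.
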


\begin{proof}
The group $G$ acts properly and cocompactly by isometries on $\widetilde\Sigma$ with its intrinsic metric, and $\Gamma$ acts properly and cocompactly on $\Hh^3$.  Fix $x_0\in\widetilde\Sigma$ and finite generating sets $\mathcal S_G$ of $G$ and $\mathcal S_G\cup\{t\}$ of $\Gamma$.  By the Milnor--\v{S}varc lemma, $\gamma\mapsto\gamma x_0$ is a quasi-isometry from $(G,|\cdot|_G)$ to $\widetilde\Sigma$ and from $(\Gamma,|\cdot|_\Gamma)$ to $\Hh^3$; the points of $\widetilde\Sigma$ are within bounded intrinsic distance of $Gx_0$.  It therefore suffices to prove the corresponding estimates for $\max\{|\gamma|_G:\gamma\in G,\ |\gamma|_\Gamma\le n\}$.

Let $\alpha=c_t|_G$.  For the lower bound, take $\gamma\in G\setminus\{1\}$.  The element $t^n\gamma t^{-n}=\alpha^n(\gamma)$ has $|\alpha^n(\gamma)|_\Gamma\leq2n+|\gamma|_\Gamma$.  The conjugacy class of $\alpha^n(\gamma)$ is the class of $f^{\pm n}(\gamma)$, because $\alpha$ represents the outer automorphism class of $f_*^{\pm1}$.  On the fixed compact hyperbolic surface $(S,\sigma_X)$, conjugacy length in $G$ is comparable with the length of the corresponding closed geodesic, and the latter grows exponentially like $\lambda^n$ for every nontrivial conjugacy class under a pseudo-Anosov map \cite{FLP}.  Since ordinary word length dominates conjugacy length, this gives the required exponential lower bound.  For the upper bound, a word of length $n$ in $\mathcal S_G\cup\{t^{\pm1}\}$ representing an element of $G$ has total $t$--exponent zero, so it can be rewritten as a product of at most $n$ elements $t^{e}st^{-e}=\alpha^{e}(s)$ with $s\in\mathcal S_G$ and $|e|\le n$.  Put $D=\max\bigl(\{2\}\cup\{|\alpha^{\pm1}(s)|_G:s\in\mathcal S_G\}\bigr)$.  Iterating the Lipschitz bounds for $\alpha$ and $\alpha^{-1}$ on word length gives $|\alpha^e(s)|_G\le D^{|e|}\le D^n$, so the element has $G$--length at most $nD^n$.  These estimates are first obtained at integer radii in the word metrics; the quasi-isometry constants from the Milnor--\v{S}varc lemma and the monotonicity of $\Dist(r)$ convert them, after changing the constants, into the stated inequalities for every real $r\ge1$.  See also \cite{NguyenSun} for distortion of subgroups of $3$--manifold groups.
\end{proof}

A quasi-isometric embedding $\Hh^2\to\Hh^3$ has an injective boundary extension, so the last assertion also follows from the non-injectivity of $\partial\iota$.  Theorems~\ref{thm:strict-contraction} and \ref{thm:exponential-distortion} concern the same map $F$: it contracts the hyperbolic metric of $X_{\min}$ by the factor $c<1$, while intrinsic and extrinsic distances on its image differ exponentially on large scales.

\subsection{The energy on Teichm\"uller space}

For $X\in\Teich$ set
\[
\E(X)=E(h_X)=\frac12\int_S|dh_X|^2\,dA_{\sigma_X}.
\]
The Jacobi operator of $h_X$ has trivial kernel: by the second variation formula, a Jacobi field $V$ along $h_X$ satisfies $\nabla V=0$ and $V\wedge dh_X(e)=0$ for every tangent vector $e$, so if $V\neq0$ then $dh_X$ has rank at most one everywhere and $h_X$ maps into a geodesic, which is impossible.  Hence $h_X$ depends smoothly on $X$ \cite{EL}, and $\E$ is differentiable.  Represent a tangent vector to $\Teich$ at $X$ by a Beltrami differential $\mu$.  Since $h_X$ is a critical point of the energy for fixed $X$, only the variation of the conformal structure contributes, and a computation in local coordinates gives
\[
d\E_X(\mu)=-4\operatorname{Re}\int_X\Phi_X\,\mu ,
\]
where $\Phi_X\mu$ is the $(1,1)$--form $\varphi(z)\mu(z)\,dx\,dy$ for $\Phi_X=\varphi\,dz^2$, $\mu=\mu(z)\,d\bar z/dz$.  Taking $\mu=\overline{\Phi_X}\,\sigma_X^{-1}$ shows
\[
d\E_X=0\quad\Longleftrightarrow\quad \Phi_X=0 .
\]
We write
\[
Z(\Phi)=\{X\in\Teich:\Phi_X=0\}
\]
for the set of critical points of $\E$.

\begin{proposition}
\label{prop:monodromy-energy}
For every $X\in\Teich$,
\[
h_{f^*X}=h_X\circ\phi,\qquad
\E(f^*X)=\E(X),\qquad
\Phi_{f^*X}=\phi^*\Phi_X.
\]
In particular $Z(\Phi)$ is invariant under the infinite cyclic group $\langle f\rangle$.
\end{proposition}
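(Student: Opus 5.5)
The plan is to show that $h_X\circ\phi$ is harmonic on $(S,\phi^*\sigma_X)$ and homotopic to $j$. Once both facts are in hand, the uniqueness in Theorem~\ref{thm:ESHartman} identifies it with $h_{f^*X}$, and the other two identities follow by naturality.

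First, the hyperbolic metric of the marked structure $f^*X=\phi^*X$ is $\phi^*\sigma_X$, since pulling back by a diffeomorphism preserves curvature $-1$ and conformal class. The map $\phi:(S,\phi^*\sigma_X)\to(S,\sigma_X)$ is then an isometry. Harmonicity is invariant under precomposition with isometries of the domain, because the energy density transforms by $|d(h\circ\phi)|^2_{\phi^*\sigma}=|dh|^2_\sigma\circ\phi$ and $dA_{\phi^*\sigma}=\phi^*dA_\sigma$. So $h_X\circ\phi$ is harmonic on $(S,\phi^*\sigma_X)$.

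Next I need $h_X\circ\phi\simeq j$. We have $h_X\simeq j$, so $h_X\circ\phi\simeq j\circ\phi$. Since $\phi$ is isotopic to $f$, this gives $j\circ\phi\simeq j\circ f$, and Lemma~\ref{lem:markings} (its first paragraph, with $n=1$) gives $j\circ f\simeq j$. Uniqueness in Theorem~\ref{thm:ESHartman}, applied at the point $f^*X$, now yields $h_{f^*X}=h_X\circ\phi$.

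The energy identity follows from the change of variables $\E(f^*X)=\tfrac12\int_S|dh_X|^2_{\sigma_X}\circ\phi\,\phi^*dA_{\sigma_X}=\E(X)$. For the Hopf differential, I use that pullback metrics compose, $(h_X\circ\phi)^*g_{M_f}=\phi^*(h_X^*g_{M_f})$. Since $\phi$ is holomorphic from $\phi^*X$ to $X$, pullback by $\phi$ commutes with taking the $(2,0)$-part, so $\Phi_{f^*X}=\phi^*\Phi_X$. Because $\phi^*$ is injective on quadratic differentials, $\Phi_X=0$ if and only if $\Phi_{f^*X}=0$, so $Z(\Phi)$ is invariant under $f^*$. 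Applying the same argument with $\phi^{-1}$, which is isotopic to $f^{-1}$ and to which Lemma~\ref{lem:markings} applies with $n=-1$, gives invariance under the whole group $\langle f\rangle$.

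The only delicate point is bookkeeping about which marking $f^*X$ carries. The pulled-back structure depends only on the isotopy class of $\phi$, and the homotopy class of the harmonic representative is exactly what Lemma~\ref{lem:markings} controls. I expect no analytic obstacle.
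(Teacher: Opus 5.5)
Your proposal is correct and follows essentially the same route as the paper: $\phi$ is an isometry $(S,\sigma_{f^*X})\to(S,\sigma_X)$, so $h_X\circ\phi$ is harmonic; Lemma~\ref{lem:markings} gives $h_X\circ\phi\simeq j\circ f\simeq j$; uniqueness in Theorem~\ref{thm:ESHartman} identifies $h_X\circ\phi$ with $h_{f^*X}$; and naturality of the energy and of the Hopf differential under pullback gives the other two identities. Your extra argument with $\phi^{-1}$ is harmless but redundant, since the equivalence $\Phi_X=0\Leftrightarrow\Phi_{f^*X}=0$, valid for every $X$, already gives invariance of $Z(\Phi)$ under the whole group $\langle f\rangle$.
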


\begin{proof}
The map $\phi:(S,\sigma_{f^*X})\to(S,\sigma_X)$ is an isometry, so $h_X\circ\phi$ is harmonic, and $h_X\circ\phi\simeq j\circ f\simeq j$ by Lemma~\ref{lem:markings}.  Theorem~\ref{thm:ESHartman} gives $h_{f^*X}=h_X\circ\phi$.  The other two identities follow because $\phi$ is an isometry and the Hopf differential is natural under pullback.
\end{proof}

\begin{theorem}[Goldman--Wentworth {\cite[Theorem~C]{GoldmanWentworth}}]
\label{thm:GWproper}
The function $\E$ descends to a proper function
\(\overline{\E}:\Teich/\langle f\rangle\to\R\).
Equivalently, if $X_n\in\Teich$ and $\sup_n\E(X_n)<\infty$, then there are integers $k_n$ such that a subsequence of $(f^{k_n})^*X_n$ converges in $\Teich$.
\end{theorem}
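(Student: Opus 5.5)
Since the statement is attributed to Goldman--Wentworth, the plan is to reduce it to a few inputs and cite their Theorem~C for the key estimate, but organized as follows.

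\emph{Step 1 (descent).} By Proposition~\ref{prop:monodromy-energy}, $\E(f^*X)=\E(X)$, so $\E$ is constant on $\langle f\rangle$--orbits and descends to a continuous function $\overline{\E}$ on $\Teich/\langle f\rangle$. The group $\langle f\rangle$ acts properly discontinuously on $\Teich$, because the mapping class group does. Therefore properness of $\overline{\E}$ is equivalent to the sequential statement: bounded energy implies precompactness modulo $\langle f\rangle$.

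\emph{Step 2 (energy controls lengths).} For every closed curve $\gamma$ on $S$, let $\ell_{M}(\gamma)$ be the length of the geodesic representative of $j_*\gamma$ in $M_f$, and let $\mathrm{Ext}_X(\gamma)$ be its extremal length. The standard inequality for harmonic maps, obtained by integrating over the annulus of modulus $1/\mathrm{Ext}_X(\gamma)$, gives
\[
\ell_M(\gamma)^2\le 2\,\E(X)\,\mathrm{Ext}_X(\gamma).
\]
This is a lower bound on extremal lengths, which by itself does not bound $X$. The opposite direction is needed.

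\emph{Step 3 (main obstacle).} I must show that if $\E(X_n)$ is bounded, then after applying powers of $f$ the structures $X_n$ stay in a compact set. By Mumford compactness modulo the mapping class group, it suffices to show two things. First, no curve becomes short. Second, the mapping classes relating $X_n$ to a fixed basepoint lie in finitely many $\langle f\rangle$--cosets.

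For the first point, if $\mathrm{Ext}_{X_n}(\gamma)\to0$, then a long thin collar maps with bounded energy. A collar of modulus $m$ has energy at least of order $m\,\ell_M(\gamma)^2$, and $\ell_M(\gamma)\ge\mathrm{sys}(M_f)>0$ because $M_f$ is closed. So the energy blows up, and curves cannot become short.

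For the second point, the harmonic maps $h_{X_n}$ have bounded energy and injectivity radius bounded below on $S$, so by Courant--Lebesgue they are uniformly controlled. Choose basepoint structures in a fixed compact set. The lifts $\widetilde h_{X_n}$ then induce the same $\rho$ but markings differing by mapping classes $\psi_n$ with $h_{X_n}$ homotopic to $j\circ\psi_n$ up to bounded geometry. Uniform Lipschitz-type control (equicontinuity via Courant--Lebesgue) would let a subsequence of the maps converge. The limit map $S\to M_f$ is homotopic to $j\circ\psi_n$ for all large $n$, so the $\psi_n$ eventually lie in a single class modulo those with $j\circ\psi\simeq j$. By Lemma~\ref{lem:markings}, those classes are exactly $\langle f\rangle$.

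This is where I expect the difficulty. Making the Courant--Lebesgue equicontinuity rigorous requires a uniform lower bound on injectivity radius on the domain, which Step 3's collar argument provides. I would then cite Goldman--Wentworth Theorem~C for any remaining details.
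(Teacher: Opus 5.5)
Your route is correct but differs from the paper's. The paper gives no argument of its own. It quotes Goldman--Wentworth's Theorem~C, which is stated for a discrete embedding onto a normal subgroup of a convex cocompact group, with $\Teich$ divided by the mapping classes induced by $\Gamma/\rho(G)$. It then uses Lemma~\ref{lem:markings} only to identify that group with $\langle f\rangle$.

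You instead reconstruct, in this special case, the Sacks--Uhlenbeck/Schoen--Yau type argument:
\begin{enumerate}[label=\textup{(\alph*)}]
\item the collar inequality $\ell_M(\gamma)^2\le 2\E(X)\,\mathrm{Ext}_X(\gamma)$, with $\ell_M(\gamma)\ge\mathrm{sys}(M_f)>0$, puts bounded-energy structures in a thick part;
\item Mumford compactness re-marks them as $X_n=\psi_n^*Y_n$ with $Y_n$ in a compact set $K$;
\item the harmonic maps $k_n=h_{X_n}\circ\psi_n^{-1}$ on $(S,Y_n)$ subconverge uniformly, so $j\circ\psi_n^{-1}\simeq j\circ\psi_{n_0}^{-1}$;
\item Lemma~\ref{lem:markings} then enters at the same point as in the paper, giving $\psi_{n_0}^{-1}\psi_n\simeq f^{k_n}$ and hence $(f^{-k_n})^*X_n=\psi_{n_0}^*Y_n\in\psi_{n_0}^*K$.
\end{enumerate}
Your version gives a self-contained proof that uses only that $M_f$ is closed. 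The citation gives brevity and the general convex cocompact, CAT$(-1)$ setting.

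Three points need tightening.

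\emph{Step 2.} Your Step~2 inequality already supplies the ``no short curves'' input. So the remark that ``the opposite direction is needed'' is misdirected: what a lower bound on extremal lengths fails to control is the marking, and your second point handles that. The collar argument in Step~3 merely repeats Step~2.

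\emph{Equicontinuity.} Courant--Lebesgue by itself only bounds the oscillation on small circles. To get equicontinuity on disks you need one more ingredient: either the maximum principle for the distance in $\Hh^3$ from a fixed point to a local lift of $k_n$, which is subharmonic because the target is nonpositively curved, or the Bochner interior gradient estimate. Apply this to the re-marked maps $k_n$ on $(S,Y_n)$, not to $h_{X_n}$ on the non-compact family $X_n$, since Arzel\`a--Ascoli needs a fixed domain geometry.

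\emph{Side of the coset.} What yields convergence of $(f^{-k_n})^*X_n$ is the left-coset condition $\psi_{n_0}^{-1}\psi_n\in\langle f\rangle$. The right-coset condition $\psi_n\psi_{n_0}^{-1}\in\langle f\rangle$ would not suffice. The homotopy $j\circ\psi_n^{-1}\simeq j\circ\psi_{n_0}^{-1}$, precomposed with $\psi_n$, gives exactly the condition you need, but you should say so.
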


In \cite{GoldmanWentworth} the statement is made for a discrete embedding onto a normal subgroup of a convex cocompact group $\Gamma$, with $\Teich$ divided by the group of mapping classes induced by $\Gamma/\rho(G)$; here that group is $\langle f\rangle$ by Lemma~\ref{lem:markings}.

\begin{theorem}
\label{thm:energy-leastarea}
$\min_{X\in\Teich}\E(X)=A_0$.  Moreover $\E(X)=A_0$ if and only if $h_X$ is a least-area map homotopic to $j$; in that case $h_X$ is a conformal embedding onto a least-area fiber.
\end{theorem}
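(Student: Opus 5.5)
The plan is to compare energy and area pointwise and then exhibit a structure where the two agree. For any smooth map $u:(S,X)\to M_f$ the pointwise inequality $\tfrac12|du|^2\,dA_{\sigma_X}\ge J_u\,dA_{\sigma_X}$ holds, where $J_u$ is the area density. Equality holds exactly where $du$ is weakly conformal. In terms of the pullback metric $u^*g=e\,\sigma_X+2\operatorname{Re}\Hopf(u)$ this is the inequality between the arithmetic and geometric means of the eigenvalues. Integrating gives $E(u)\ge\Area(u)$, with equality iff $\Hopf(u)\equiv0$. Applying this to $u=h_X$, which is homotopic to $j$, gives $\E(X)\ge\Area(h_X)\ge A_0$ for every $X\in\Teich$.

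For the reverse inequality I will use $X_{\min}$ and Corollary~\ref{cor:Hopfzero}. There $h_{X_{\min}}=u_0$ is conformal, so $\E(X_{\min})=\Area(u_0)=\Area(\Sigma_{\min})=A_0$ by Theorem~\ref{thm:leastarea}. Hence the minimum of $\E$ exists, is attained at $X_{\min}$, and equals $A_0$.

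For the characterization, suppose first that $\E(X)=A_0$. Both inequalities in the chain $\E(X)\ge\Area(h_X)\ge A_0$ are then equalities. So $h_X$ realizes the infimum $A_0$ among smooth maps homotopic to $j$, that is, it is a least-area map. Conversely, if $h_X$ is least-area, then $\Area(h_X)=A_0$. Moreover, $X$ is a global minimum of $\E$ only if we also know $\Phi_X=0$. I will obtain this as follows. A least-area map is a critical point of area under all variations, including reparametrizations, and a harmonic map that is critical for area is weakly conformal. A more robust route is to use the description of least-area immersions in Theorem~\ref{thm:leastarea}'s proof. If $\Phi_X\neq0$, I would precompose $h_X$ with the quasiconformal map that straightens the pullback conformal structure on the open set where $h_X$ is an immersion. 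This does not change the area and strictly lowers the energy at a structure $Y$, giving $\E(Y)<\E(X)$. I expect a slicker argument via the first variation $d\E_X(\mu)=-4\operatorname{Re}\int\Phi_X\mu$, since area-minimality of $h_X$ among all maps forces $\E(Y)\ge\Area(h_X\circ\mathrm{id})$. In any case $\Phi_X=0$, so $\E(X)=\Area(h_X)=A_0$.

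For the last assertion, the least-area map $h_X$ is a least-area immersion in the homotopy class of $j$. By the Freedman--Hass--Scott argument already carried out in Theorem~\ref{thm:leastarea}, it is either an embedding or a double cover of a one-sided surface, and the latter was excluded there. So $h_X$ is an embedding onto a least-area surface isotopic to the fiber, that is, onto a least-area fiber, and it is conformal because $\Phi_X=0$.

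The main obstacle is the regularity needed to invoke Freedman--Hass--Scott. That result is stated for least-area immersions, while a priori $h_X$ is only a branched minimal immersion. I would handle this by citing the Osserman/Gulliver absence of interior branch points for area minimizers in $3$--manifolds, or by citing Hass--Scott's formulation, which applies to least-area maps in the class.
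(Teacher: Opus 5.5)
Your proof that $\min\E=A_0$ is the paper's argument, and so is your forward implication that $\E(X)=A_0$ turns both inequalities in $\E(X)\ge\Area(h_X)\ge A_0$ into equalities. Your remark that branch points must be excluded before invoking Freedman--Hass--Scott is a precision the paper omits.

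The gap is in the converse, where you try to show that a harmonic $h_X$ which merely has area $A_0$ must have $\Phi_X=0$. Your first route uses the claim that a harmonic map which is critical for area is weakly conformal, and this claim is false. Take the homotopy class of a closed totally geodesic surface in a hyperbolic $3$--manifold. There every $h_X$ is a harmonic diffeomorphism onto that surface, so it has least area for every $X$, yet it is conformal for only one $X$. Area-criticality only makes the image $\Sigma$ minimal. Harmonicity of $h_X$ then splits into two parts. The tangential part is harmonicity into $(\Sigma,g_\Sigma)$, which every $X$ can satisfy via the harmonic diffeomorphism $(S,X)\to\Sigma$ in the right class. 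The normal part is $\operatorname{tr}_{\sigma_X}h_X^*A=0$. Everything hinges on this normal condition, and you give no argument that it forces $\Phi_X=0$.

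Your other two routes are circular. Suppose $h_X$ is a least-area immersion with induced conformal structure $Y$ and $\Phi_X\neq0$. Then $\E(Y)\le E_Y(h_X)=A_0<\E(X)$. This contradicts nothing: it only says $X$ is not a minimum of $\E$, which is exactly what you need to rule out. Likewise $\E(Y)\ge\Area(h_X)=A_0$ is compatible with $\E$ decreasing from $X$ along a direction where $d\E_X(\mu)=-4\operatorname{Re}\int_X\Phi_X\mu<0$.

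The paper does not prove this stronger converse either. It derives the converse only from $\E(X_{\min})=A_0$, that is, for $h_X=u_0$ the conformal parametrization of a least-area fiber, consistent with the final clause of the theorem. Under that reading your own chain $\E(X)=E_X(h_X)=\Area(h_X)=A_0$ already finishes the proof. For an a priori non-conformal least-area $h_X$ you would need a new argument showing the normal condition fails when $\Phi_X\neq0$. That argument must use that the fiber is not totally geodesic ($A\not\equiv0$), since the example above shows the conclusion fails otherwise.
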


\begin{proof}
For every smooth $u:(S,\sigma_X)\to M_f$ we have $E_X(u)\geq\Area(u)$, with equality if and only if $u$ is weakly conformal.  Hence
\(\E(X)=E_X(h_X)\geq\Area(h_X)\geq A_0\).
For a least-area fiber and $u_0$, $X_{\min}$ as in Section~4, Corollary~\ref{cor:Hopfzero} gives $h_{X_{\min}}=u_0$, so
\(\E(X_{\min})=E_{X_{\min}}(u_0)=\Area(u_0)=A_0\).
If $\E(X)=A_0$, both inequalities are equalities, so $h_X$ is weakly conformal and has least area in its homotopy class; by the discussion preceding Theorem~\ref{thm:leastarea} it is an embedding onto a least-area fiber.  The converse is the equality $\E(X_{\min})=A_0$.
\end{proof}

\begin{corollary}
\label{cor:minset}
The set of global minima of $\E$ is contained in $Z(\Phi)$.  It is the set of structures $(f^n)^*X_{\min}$, where $\Sigma_{\min}$ ranges over least-area fibers and $n\in\Z$, and its image in $\Teich/\langle f\rangle$ is compact.
\end{corollary}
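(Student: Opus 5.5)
The corollary has three parts, and I would take them in turn, using Theorem~\ref{thm:energy-leastarea}, Corollary~\ref{cor:Hopfzero}, Proposition~\ref{prop:monodromy-energy} and Theorem~\ref{thm:GWproper}.

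\emph{Containment in $Z(\Phi)$.} Let $X$ be a global minimum of $\E$. Since $\E$ is differentiable, $d\E_X=0$. By the first-variation formula $d\E_X(\mu)=-4\operatorname{Re}\int_X\Phi_X\mu$ this forces $\Phi_X=0$, so $X\in Z(\Phi)$. As a second route, Theorem~\ref{thm:energy-leastarea} says $h_X$ is conformal, so its Hopf differential vanishes directly.

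\emph{Description of the minimum set.} For one inclusion, take a least-area fiber $\Sigma_{\min}$ with its parametrization $u_0$. Corollary~\ref{cor:Hopfzero} and the proof of Theorem~\ref{thm:energy-leastarea} give $\E(X_{\min})=A_0$. Proposition~\ref{prop:monodromy-energy} then gives $\E((f^n)^*X_{\min})=A_0$ for every $n$.

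For the other inclusion, suppose $\E(X)=A_0$. By Theorem~\ref{thm:energy-leastarea}, $h_X$ is a conformal embedding onto some least-area fiber $\Sigma$. Put $u_0:=h_X$. This is a diffeomorphism $S\to\Sigma$ homotopic to $j$, and $X$ is exactly the marked conformal structure of $u_0^*g_{M_f}$, because $h_X$ is conformal from $(S,X)$. So $X$ arises as an $X_{\min}$ for the pair $(\Sigma,u_0)$.

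Two points need checking here. First, ``diffeomorphism'' requires $h_X$ to be an immersion. I would get this from Theorem~\ref{thm:leastarea}, whose embedded least-area surface is smooth, together with the Freedman--Hass--Scott conclusion that the least-area map itself is an embedding. Second, a different choice of $u_0$ changes $X_{\min}$ only by $(f^n)^*$, by Lemma~\ref{lem:markings}. Hence the family $\{(f^n)^*X_{\min}\}$ over all least-area fibers is well defined and equals the minimum set.

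\emph{Compactness modulo $\langle f\rangle$.} The minimum set is $\E^{-1}(A_0)$, which is closed and $\langle f\rangle$-invariant. Its image in $\Teich/\langle f\rangle$ is $\overline{\E}^{-1}(A_0)$, a closed subset of the sublevel set $\overline{\E}^{-1}((-\infty,A_0])$. That sublevel set is compact because $\overline{\E}$ is proper (Theorem~\ref{thm:GWproper}). In sequential form: given minimizers $X_n$, choose $k_n$ so that a subsequence of $(f^{k_n})^*X_n$ converges to some $Y$. Continuity of $\E$ gives $\E(Y)=A_0$, and the translates $(f^{k_n})^*X_n$ are still minimizers.

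The main obstacle is the identification step in the second part: checking that a minimizer $X$ is literally the conformal structure induced by a diffeomorphism onto the least-area fiber, with the correct marking. This hinges on $h_X$ being an immersion and not merely a branched minimal map, and that is where the Freedman--Hass--Scott embeddedness conclusion must be invoked.
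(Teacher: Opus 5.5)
Your proposal is correct and follows essentially the same route as the paper. Theorem~\ref{thm:energy-leastarea}, together with Proposition~\ref{prop:monodromy-energy} and Lemma~\ref{lem:markings}, gives the containment in $Z(\Phi)$ and the description of the minimum set, and compactness is that of the closed set $\overline{\E}^{-1}(A_0)$ under the proper function of Theorem~\ref{thm:GWproper}; your extra route via $d\E_X=0$ and your re-derivation of the immersion property from Freedman--Hass--Scott are consistent with the paper but not needed, since Theorem~\ref{thm:energy-leastarea} already states that $h_X$ is a conformal embedding onto a least-area fiber.
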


\begin{proof}
The first two assertions follow from Theorem~\ref{thm:energy-leastarea}, Proposition~\ref{prop:monodromy-energy} and Lemma~\ref{lem:markings}.  The image is the closed subset $\overline{\E}^{-1}(A_0)$ of $\Teich/\langle f\rangle$, which is compact by Theorem~\ref{thm:GWproper}.
\end{proof}

\begin{corollary}
\label{cor:zero-orbit}
For a least-area fiber, the structures $(f^n)^*X_{\min}$, $n\in\Z$, are pairwise distinct points of $Z(\Phi)$, and they leave every compact subset of $\Teich$ as $|n|\to\infty$.  All of them correspond to the same surface $\Sigma_{\min}$.
\end{corollary}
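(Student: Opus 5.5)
The statement has four parts: the points $(f^n)^*X_{\min}$ lie in $Z(\Phi)$, they all come from the same surface, they are pairwise distinct, and they leave every compact set. The first two are direct consequences of earlier results, so the proof reduces to showing that $\langle f\rangle$ acts freely and properly discontinuously on the orbit of $X_{\min}$.

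\emph{Membership and same surface.} Corollary~\ref{cor:Hopfzero} gives $\Phi_{X_{\min}}=0$. Proposition~\ref{prop:monodromy-energy} gives $\Phi_{(f^n)^*X_{\min}}=(\phi^n)^*\Phi_{X_{\min}}=0$, so every $(f^n)^*X_{\min}$ lies in $Z(\Phi)$. The same proposition gives $h_{(f^n)^*X_{\min}}=u_0\circ\phi^n$. This map is a conformal embedding of $(S,(f^n)^*X_{\min})$ onto $\Sigma_{\min}$, so all the points of the orbit correspond to the same surface $\Sigma_{\min}$. Equivalently, $(f^n)^*X_{\min}$ is the structure obtained from the alternative parametrization $u_0\circ\phi^n$, as in the remark following Theorem~\ref{thm:leastarea}.

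\emph{Distinctness.} It suffices to show that $(f^n)^*X\neq X$ for every $n\neq0$ and every $X\in\Teich$. Suppose $(f^n)^*X=X$. Then the mapping class of $f^n$ fixes a point of Teichm\"uller space, so it is realized by a conformal automorphism of $X$. The automorphism group of a closed hyperbolic surface is finite, so $f^{nm}$ is isotopic to the identity for some $m\ge1$. This contradicts $f$ being pseudo-Anosov: for instance, a pseudo-Anosov map stretches the length of every nontrivial closed geodesic by a factor comparable to $\lambda^{|nm|}$, as used in the proof of Theorem~\ref{thm:exponential-distortion}. Equivalently, $f^{nm}$ multiplies $\mu^u$ by $\lambda^{nm}\ne1$. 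Hence the points $(f^n)^*X_{\min}$ are pairwise distinct.

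\emph{Escape to infinity.} This is the step needing the most care. Suppose a subsequence $(f^{n_k})^*X_{\min}$ with $|n_k|\to\infty$ stays in a compact set $K\subset\Teich$. The mapping class group acts properly discontinuously on $\Teich$, which is the standard fact I would invoke. So only finitely many mapping classes $\psi$ satisfy $\psi^*K\cap K\neq\emptyset$. Since the $f^{n_k}$ are infinitely many distinct mapping classes (they are distinct because $f$ has infinite order, as shown above), this is impossible. An alternative is a Teichm\"uller-distance argument: the translation length of $f$ is $\log\lambda>0$ (Bers), so $d_{\Teich}\bigl((f^n)^*X,X\bigr)\ge|n|\log\lambda\to\infty$; this is the quantitative version of the same claim.
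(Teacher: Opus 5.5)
Your proof is correct and matches the paper's argument. The paper's proof cites three facts: a pseudo-Anosov class has infinite order, it has no fixed point in $\Teich$, and the mapping class group acts properly discontinuously on $\Teich$. Membership in $Z(\Phi)$ and the common surface $\Sigma_{\min}$ follow from Corollary~\ref{cor:Hopfzero} and Proposition~\ref{prop:monodromy-energy}, exactly as you use them. You add detail the paper omits: a Hurwitz-finiteness proof that no $f^n$ fixes a point, and the Bers translation-length alternative. One small fix: before applying proper discontinuity, enlarge $K$ to contain $X_{\min}$, or compare the points with $(f^{n_1})^*X_{\min}$ instead of $X_{\min}$.
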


\begin{proof}
A pseudo-Anosov mapping class has no fixed point in $\Teich$ and infinite order, and the mapping class group acts properly discontinuously on $\Teich$.
\end{proof}

\begin{proposition}
\label{prop:minimal-zero-injection}
Let $\Sigma_1,\Sigma_2\subset M_f$ be embedded minimal surfaces isotopic to the fiber, with conformal parametrizations $u_i:(S,X_i)\to\Sigma_i$ homotopic to $j$.  If $X_2=(f^n)^*X_1$ for some $n$, then $\Sigma_1=\Sigma_2$.  Thus $\Sigma\mapsto\langle f\rangle\cdot X_\Sigma$ is an injective map from embedded minimal surfaces isotopic to the fiber to $\langle f\rangle$--orbits in $Z(\Phi)$.
\end{proposition}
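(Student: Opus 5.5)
The plan is to identify each $u_i$ with a harmonic representative and then use the uniqueness in Theorem~\ref{thm:ESHartman} together with the monodromy equivariance of Proposition~\ref{prop:monodromy-energy}.

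First, since $u_i:(S,X_i)\to M_f$ is conformal and its image is minimal, $u_i$ is harmonic, exactly as in the proof of Corollary~\ref{cor:Hopfzero}. It is homotopic to $j$, so uniqueness in Theorem~\ref{thm:ESHartman} gives $u_i=h_{X_i}$. In particular $\Phi_{X_i}=0$, so $X_i\in Z(\Phi)$. This also makes the map $\Sigma\mapsto\langle f\rangle\cdot X_\Sigma$ well defined. Another conformal parametrization of $\Sigma$ homotopic to $j$ differs from $u_\Sigma$ by a diffeomorphism $\psi$ of $S$ with $j\circ\psi\simeq j$. By Lemma~\ref{lem:markings}, $\psi$ is isotopic to some $f^m$, so the marked structure changes to $(f^m)^*X_\Sigma$, in the same orbit.

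Next, suppose $X_2=(f^n)^*X_1$. Iterating Proposition~\ref{prop:monodromy-energy}, with $\phi^n$ in place of $\phi$ and negative $n$ handled through $\phi^{-1}$, gives $h_{(f^n)^*X_1}=h_{X_1}\circ\phi^n$. Hence $u_2=h_{X_2}=h_{X_1}\circ\phi^n=u_1\circ\phi^n$. Since $\phi^n$ is a diffeomorphism of $S$, the images agree: $\Sigma_2=u_2(S)=u_1(S)=\Sigma_1$. Injectivity on orbits follows at once, because equal orbits means $X_2=(f^n)^*X_1$ for some $n$.

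The only delicate point is bookkeeping of markings. Here $f^*X$ is defined through the fixed smooth representative $\phi$, and pullback by isotopic diffeomorphisms gives the same point of $\Teich$. So the identity $h_{(f^n)^*X}=h_X\circ\phi^n$ must be read with $(f^n)^*X=(\phi^n)^*X$, and the induction in Proposition~\ref{prop:monodromy-energy} applies directly. No analytic input beyond Hartman's uniqueness is needed.
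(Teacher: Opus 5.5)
Your proposal is correct and follows the paper's own proof: you identify $u_i=h_{X_i}$ by the uniqueness in Theorem~\ref{thm:ESHartman}, use Proposition~\ref{prop:monodromy-energy} to get $u_2=u_1\circ\phi^n$ and hence $\Sigma_2=\Sigma_1$, and invoke Lemma~\ref{lem:markings} for well-definedness of the orbit. Your extra bookkeeping only makes explicit what the paper leaves implicit, namely iterating the monodromy identity to $\phi^n$ (including negative $n$) and noting that $X_i\in Z(\Phi)$.
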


\begin{proof}
The maps $u_i$ are conformal and minimal, hence harmonic, so $u_i=h_{X_i}$ by Theorem~\ref{thm:ESHartman}.  By Proposition~\ref{prop:monodromy-energy}, $h_{X_2}=h_{X_1}\circ\phi^n$, so $\Sigma_2=u_2(S)=u_1(S)=\Sigma_1$.  The orbit $\langle f\rangle\cdot X_\Sigma$ does not depend on the choice of $u$ by Lemma~\ref{lem:markings}.
\end{proof}

A point of $Z(\Phi)$ gives a branched minimal immersion homotopic to $j$; without further information it need not be an embedding, so the map of Proposition~\ref{prop:minimal-zero-injection} need not be surjective.

\begin{corollary}
\label{cor:bounded-energy-escape}
Let $X_n\in\Teich$ with $\sup_n\E(X_n)<\infty$.  There are a compact set $K\subset\Teich$ and integers $k_n$ with $X_n\in(f^{-k_n})^*K$ for all $n$.  If $X_n$ leaves every compact subset of $\Teich$, then $|k_n|\to\infty$.
\end{corollary}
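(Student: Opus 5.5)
The plan is to obtain the first assertion directly from the properness statement of Theorem~\ref{thm:GWproper}, and the second from the proper discontinuity of the mapping class group action on $\Teich$.

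Let $C=\sup_n\E(X_n)<\infty$. By Theorem~\ref{thm:GWproper}, $\overline{\E}$ is proper on $\Teich/\langle f\rangle$, so $\overline{\E}^{-1}((-\infty,C])$ is a compact subset $\overline K$ of the quotient. I will lift $\overline K$ to a compact subset $K$ of $\Teich$. Since $\langle f\rangle$ acts properly discontinuously on the locally compact space $\Teich$, the quotient map is open. Cover $\overline K$ by the images of finitely many compact balls in $\Teich$, and let $K$ be their union; then $K$ is compact and its image contains $\overline K$.

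For each $n$, the orbit of $X_n$ lies in $\overline K$, so some translate $(f^{k_n})^*X_n$ lies in $K$. Equivalently, $X_n\in(f^{-k_n})^*K$, where I use $(\phi^{k})^*$ composed with $(\phi^{-k})^*$ being the identity on marked structures. This settles the first claim. As a fallback, one could instead argue sequentially: Theorem~\ref{thm:GWproper} gives subsequential convergence, and a diagonal argument would also produce $K$. The covering argument is cleaner, though, because it yields a single $K$ for all $n$ at once.

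For the second claim, suppose $X_n$ leaves every compact set but $|k_n|\not\to\infty$. Then along a subsequence $k_n=k$ is constant. In that case $X_n\in(f^{-k})^*K$, which is compact because pullback by a fixed mapping class is a homeomorphism of $\Teich$. This contradicts the assumption that $X_n$ leaves every compact set. The argument uses only that each $(f^{m})^*$ is a homeomorphism, so no further estimate is needed.

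The main point requiring care is the lifting of compact sets from $\Teich/\langle f\rangle$ to $\Teich$. This works because $\Teich$ is locally compact (it is homeomorphic to a ball) and the quotient map is open; I do not expect further obstacles.
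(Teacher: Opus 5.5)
Your proof is correct and follows the paper's argument. The paper likewise gets $K$ from the properness of $\overline{\E}$ in Theorem~\ref{thm:GWproper}, and gets $|k_n|\to\infty$ because bounded $|k_n|$ along a subsequence would keep it in a compact set; you spell out the lift of the compact sublevel set $\overline{\E}^{-1}([0,C])$ to $\Teich$, which the paper leaves implicit, and you pass to a constant $k_n$ where the paper uses a finite union of translates of $K$.
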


\begin{proof}
The first statement is Theorem~\ref{thm:GWproper} together with the proper discontinuity of the action of $\langle f\rangle$.  If $|k_n|$ were bounded along a subsequence, that subsequence would lie in a finite union of translates of $K$, which is compact.
\end{proof}

We use the standard left action of the mapping class group on measured foliations,
\[
f\cdot[\mathcal F]=[(f^{-1})^*\mathcal F].
\]
With this convention, pullback by $\phi^n$ represents the action of $f^{-n}$ on $\mathcal{PMF}(S)$.

\begin{proposition}
\label{prop:Hopf-north-south}
Fix $X\in\Teich$ with $\Phi_X\neq0$, and let $[\mathcal F]\in\mathcal{PMF}(S)$ be the projective class of the vertical measured foliation of $\Phi_X$.  For $X_n=(f^n)^*X$, the vertical foliation of $\Phi_{X_n}=(\phi^n)^*\Phi_X$ has projective class $f^{-n}[\mathcal F]$.  Consequently, as $n\to+\infty$ these classes converge to $[\mathcal F^s]$ unless $[\mathcal F]=[\mathcal F^u]$, and as $n\to-\infty$ they converge to $[\mathcal F^u]$ unless $[\mathcal F]=[\mathcal F^s]$.  The same holds for horizontal foliations.
\end{proposition}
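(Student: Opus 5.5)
The proof splits into two parts. First we identify the vertical foliation of $\Phi_{X_n}$. Then we apply the north--south dynamics of a pseudo-Anosov class on $\mathcal{PMF}(S)$.

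\emph{Identifying the foliation.} By Proposition~\ref{prop:monodromy-energy}, $\Phi_{X_n}=(\phi^n)^*\Phi_X$, since $f^*$ is realized by the fixed diffeomorphism $\phi$ and iterating gives $(f^n)^*X=(\phi^n)^*X$. In a natural coordinate $\zeta$ with $\Phi_X=d\zeta^2$, the vertical measured foliation is $\{\operatorname{Re}\zeta=\text{const}\}$ with transverse measure $|d\operatorname{Re}\zeta|$. The function $\zeta\circ\phi^n$ is then a natural coordinate for $(\phi^n)^*\Phi_X$, holomorphic for $X_n$. So the vertical foliation of $\Phi_{X_n}$ is exactly $(\phi^n)^*\mathcal F$ as a measured foliation; zeros are handled by the same pullback, since $\phi^n$ maps zeros to zeros. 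Measured foliation classes depend only on the isotopy class of the map, and $\phi$ is isotopic to $f$. Hence, with the convention $f\cdot[\mathcal F]=[(f^{-1})^*\mathcal F]$, we get $[(\phi^n)^*\mathcal F]=f^{-n}[\mathcal F]$ in $\mathcal{PMF}(S)$. This is just the remark preceding the proposition, applied with $n$ in place of $1$.

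\emph{Convergence.} Next I invoke Thurston's theorem (see \cite{FLP}): a pseudo-Anosov $f$ acts on $\mathcal{PMF}(S)$ with exactly two fixed points $[\mathcal F^s]$ and $[\mathcal F^u]$. For every $[\mathcal G]\neq[\mathcal F^s]$ we have $f^{m}[\mathcal G]\to[\mathcal F^u]$ as $m\to+\infty$, and for every $[\mathcal G]\neq[\mathcal F^u]$ we have $f^{-m}[\mathcal G]\to[\mathcal F^s]$. The orientation of this statement must match the paper's conventions, so I will check it against the definition: $f(\mathcal F^u,\mu^u)=(\mathcal F^u,\lambda\mu^u)$ means $f$ expands the measure on $\mathcal F^u$. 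In Thurston's normalization, $i(f^m\mathcal G,\cdot)$ grows like $\lambda^m i(\mathcal G,\mathcal F^s)\,\mathcal F^u$, so the attracting point for positive iterates is $[\mathcal F^u]$, valid exactly when $i(\mathcal G,\mathcal F^s)\neq0$, i.e.\ $[\mathcal G]\ne[\mathcal F^s]$. Setting $m=n\to+\infty$ with $f^{-n}$ gives convergence to $[\mathcal F^s]$ unless $[\mathcal F]=[\mathcal F^u]$. The case $n\to-\infty$ is symmetric.

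The horizontal foliation is the vertical foliation of $-\Phi_X$, and $(\phi^n)^*(-\Phi_X)=-\Phi_{X_n}$, so the same argument applies verbatim. The hypothesis $\Phi_X\ne0$ is needed only so that $[\mathcal F]$ is a point of $\mathcal{PMF}(S)$.

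\emph{Main obstacle.} The mathematics is standard; the only delicate point is sign bookkeeping. Pullback versus pushforward, and which of $\mathcal F^s,\mathcal F^u$ is attracting under $f$ acting on $\mathcal{PMF}$ with the stated left-action convention, must be matched to the given normalization $f(\mathcal F^s,\mu^s)=(\mathcal F^s,\lambda^{-1}\mu^s)$. I would verify this with intersection numbers: $i(f^{-n}\mathcal F,\mathcal F^u)=i(\mathcal F,f^n\mathcal F^u)=\lambda^{\pm n}i(\mathcal F,\mathcal F^u)$. This confirms which limit occurs and that the exceptional case is precisely $i(\mathcal F,\mathcal F^u)=0$, i.e.\ $[\mathcal F]=[\mathcal F^u]$, using that $\mathcal F^u$ is uniquely ergodic and filling.
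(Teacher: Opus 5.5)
Your proposal is correct and follows the paper's own route. Both arguments use naturality of the Hopf differential, and hence of its vertical and horizontal measured foliations, under pullback by $\phi^n$; the convention $f\cdot[\mathcal F]=[(f^{-1})^*\mathcal F]$ to identify this with the $f^{-n}$--orbit; and north--south dynamics, with $[\mathcal F^u]$ attracting for positive iterates of $f$. Your intersection-number check only makes explicit the orientation that the paper cites; with the pushforward convention $f\cdot\mathcal F^u=\lambda\mathcal F^u$, the sign in your $\lambda^{\pm n}$ is $+$.
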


\begin{proof}
The identity for Hopf differentials is Proposition~\ref{prop:monodromy-energy}, and pullback of a quadratic differential by $\phi^n$ pulls back its vertical and horizontal measured foliations.  By the convention just fixed, this is the $f^{-n}$--orbit in $\mathcal{PMF}(S)$.  The asserted limits are therefore the north--south dynamics of the pseudo-Anosov class $f$: $[\mathcal F^u]$ is attracting and $[\mathcal F^s]$ is repelling for positive iterates of $f$ \cite{FLP}.
\end{proof}

By Corollary~\ref{cor:bounded-energy-escape}, sequences of bounded energy leave compact sets only along $\langle f\rangle$--orbits up to a bounded error, and Proposition~\ref{prop:Hopf-north-south} describes the Hopf foliations along these orbits.  Sequences with $\E(X_n)\to\infty$ are not covered by these results.  For large Hopf differentials of harmonic maps into hyperbolic $3$--manifolds see Minsky \cite{Minsky}.

\subsection{Equivariant energy minimization}

Fix $X\in\Teich$ and let $\mathcal A_X$ be the set of smooth $\rho$--equivariant maps $u:\Hh^2\to\Hh^3$.  For $u\in\mathcal A_X$ the function $|du|^2$ is $\rho_X(G)$--invariant, and we set
\[
E_X(u)=\frac12\int_D|du|^2\,dA_{\sigma_X}
\]
for a fundamental domain $D$ of $\rho_X(G)$ with boundary of measure zero; this does not depend on $D$.

\begin{proposition}
\label{prop:variational-CT}
Every $u\in\mathcal A_X$ extends continuously to $\Hh^2\cup S^1$ with boundary map $\partial\iota$.  The functional $E_X$ has a unique minimizer on $\mathcal A_X$, namely $\widetilde h_X$.
\end{proposition}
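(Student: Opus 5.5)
The first assertion is a direct application of Proposition~\ref{prop:anyrepresentative}. A smooth $\rho$--equivariant map $u\in\mathcal A_X$ is in particular continuous and $\rho$--equivariant, so it extends continuously to $\Hh^2\cup S^1$ with boundary map $\partial\iota$. No further argument is needed here.

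For the variational statement, I would pass between $\mathcal A_X$ and maps $S\to M_f$ homotopic to $j$. Each $u\in\mathcal A_X$ descends to a smooth map $\bar u:S\to M_f$, since $u\circ\rho_X(\gamma)=\rho(\gamma)\circ u$ and $\rho(G)\subset\Gamma$. I would then show $\bar u\simeq j$. The straight-line homotopy $u_s(z)$, the point at parameter $s$ on the geodesic segment from $\widetilde j(z)$ to $u(z)$, is $\rho$--equivariant because $\rho(\gamma)$ is an isometry and geodesics are unique in $\Hh^3$. It therefore descends to a homotopy from $j$ to $\bar u$; if one prefers a smooth start, replace $\widetilde j$ by the smooth lift $\widetilde h_X$. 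Since $\rho_X(G)$ acts by isometries of $\sigma_X$ and $|du|^2$ is invariant, $E_X(u)$ computed on a fundamental domain $D$ equals the energy $E(\bar u)$ of the descended map on $(S,\sigma_X)$. Conversely, Proposition~\ref{prop:anyrepresentative} shows that any smooth $a\simeq j$ has a $\rho$--equivariant lift, which lies in $\mathcal A_X$. In particular $\widetilde h_X\in\mathcal A_X$, and $E_X(\widetilde h_X)=\E(X)$.

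Minimality then needs that $h_X$ minimizes energy in its homotopy class. I would argue by convexity. For $u\in\mathcal A_X$, consider the geodesic homotopy $u_s$ from $\widetilde h_X$ to $u$ just described; each $u_s$ lies in $\mathcal A_X$. Since $\Hh^3$ has nonpositive curvature, $s\mapsto E_X(u_s)$ is convex; this is the standard second-variation computation along geodesic homotopies, and all integrands are $G$--invariant, so integrating over $D$ is legitimate. Its derivative at $s=0$ vanishes because $h_X$ is harmonic, i.e.\ critical for energy among maps of $S$. Hence $E_X(u)\ge E_X(\widetilde h_X)$, so $\widetilde h_X$ is a minimizer.

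The main obstacle is uniqueness. Suppose $E_X(u)=E_X(\widetilde h_X)$. Convexity forces $s\mapsto E_X(u_s)$ to be constant on $[0,1]$, so it is a minimizer at every $s$, and in particular $u=u_1$ is a minimizer. A minimizer is critical, hence harmonic, so $\bar u$ is a harmonic map $S\to M_f$ homotopic to $j$. Theorem~\ref{thm:ESHartman} then gives $\bar u=h_X$. It remains to fix the lift: $u$ and $\widetilde h_X$ are lifts of the same map differing by some $\delta\in\Gamma$ with $\delta\rho(\gamma)\delta^{-1}=\rho(\gamma)$ for all $\gamma$. I would show $\delta=1$. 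The centralizer of the nonelementary group $\rho(G)$ in $\Isom^+(\Hh^3)$ is trivial, because $\delta$ must fix the fixed points of every loxodromic element of $\rho(G)$, i.e.\ a dense set of $S^2$ by Section~1. This gives $u=\widetilde h_X$.
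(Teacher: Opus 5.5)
Your proposal is correct and follows the paper's proof essentially step for step. The paper uses the same ingredients: Proposition~\ref{prop:anyrepresentative} for the boundary map, the $\rho$--equivariant geodesic homotopy to identify $\mathcal A_X$ with lifts of smooth maps homotopic to $j$, convexity of energy along geodesic homotopies for minimality, and ``every minimizer is harmonic'' together with Theorem~\ref{thm:ESHartman} for uniqueness. Your final step rules out a different lift $\delta\circ\widetilde h_X$ using the trivial centralizer of $\rho(G)$; the needed relation $\delta\rho(\gamma)=\rho(\gamma)\delta$ holds because $\Gamma$ acts freely on $\Hh^3$. This makes explicit a point the paper leaves implicit.
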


\begin{proof}
The first statement is Proposition~\ref{prop:anyrepresentative}.  Elements of $\mathcal A_X$ are the lifts of the smooth maps $S\to M_f$ homotopic to $j$ (connect $u$ to $\widetilde j$ by the $\rho$--equivariant geodesic homotopy in $\Hh^3$), and $E_X(u)$ is the energy of the quotient map.  A harmonic map into a manifold of nonpositive curvature minimizes energy in its homotopy class, since energy is convex along geodesic homotopies; every minimizer is harmonic; and Theorem~\ref{thm:ESHartman} gives uniqueness.
\end{proof}

No boundary condition is imposed in Proposition~\ref{prop:variational-CT}: the Peano boundary map is determined by the equivariance.

\section{The Cannon--Thurston relation}

Define an equivalence relation on $\partial G\cong S^1$ by
\[
\xi\sim_{CT}\eta
\quad\Longleftrightarrow\quad
\partial\iota(\xi)=\partial\iota(\eta).
\]
Let $\widetilde\Lambda^s,\widetilde\Lambda^u\subset\Hh^2$ be the lifts of the stable and unstable geodesic laminations of $f$ for a hyperbolic structure $X$.  Two distinct points $\xi,\eta\in S^1$ satisfy $\xi\sim_{CT}\eta$ if and only if they are the endpoints of a leaf of $\widetilde\Lambda^s$ or of $\widetilde\Lambda^u$, or ideal vertices of the same complementary ideal polygon of one of these laminations \cite{CT}; for general degenerate surface groups the corresponding description in terms of ending laminations is due to Mj \cite{MjEnding}.  By Theorems~\ref{thm:harmonicCT} and \ref{thm:minimalPeano}:

\begin{proposition}
\label{prop:sameequiv}
The boundary maps of $\widetilde h_X$ for every $X\in\Teich$, and of the $\rho$--equivariant lift of every least-area fiber, induce the same equivalence relation $\sim_{CT}$ on $\partial G$.
\end{proposition}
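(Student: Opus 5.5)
The plan is to reduce everything to the single statement that all the boundary maps in question are literally equal to $\partial\iota$, once the circles $\partial\Hh^2_X$ are identified with $\partial G$ as in Section~1. The equivalence relation a map induces on $\partial G$ depends only on the map, so equal maps induce the same relation. That relation is then $\sim_{CT}$ by definition.

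First, for the harmonic representatives, I would invoke Theorem~\ref{thm:harmonicCT}. It gives, for each $X\in\Teich$, $\widehat h_X|_{S^1}=\partial\iota$ as maps on $\partial\Hh^2_X$. Corollary~\ref{cor:Xindependent} records that under $\partial\Hh^2_X\cong\partial G$ these boundary maps are independent of $X$. One point needs checking: Theorem~\ref{thm:CT} is stated for a lift $\widetilde j$ on $\Hh^2_X$ for one structure $X$, whereas $\partial\iota$ is regarded as a map on $\partial G$. I would note that for two structures $X,Y$, the lift of the identity of $S$ is a $G$--equivariant quasi-isometry $\Hh^2_X\to\Hh^2_Y$. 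Composing $\widetilde j_Y$ with it gives a $\rho$--equivariant map on $\Hh^2_X$. Proposition~\ref{prop:anyrepresentative} then shows that both have boundary map $\partial\iota$ under the identification, so $\partial\iota$ is well defined on $\partial G$.

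Second, for a least-area fiber, I would apply Theorem~\ref{thm:minimalPeano}(iv). The $\rho$--equivariant lift $F$ of $u_0$ on the universal cover of $X_{\min}$ satisfies $\widehat F|_{S^1}=\partial\iota$, again via the identification of $\partial\Hh^2_{X_{\min}}$ with $\partial G$. If a different parametrization $u_0$, or a different lift, is chosen, I would appeal to Remark~\ref{rem:otherlifts} and Lemma~\ref{lem:markings}. Other lifts by $\delta\in\Gamma$ postcompose the boundary map with $\delta$, which does not change which points of $\partial G$ are identified, since $\delta$ is a bijection of $S^2$. A reparametrization by a map isotopic to $f^n$ precomposes with the boundary homeomorphism induced by $f^n$. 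The statement, however, concerns the $\rho$--equivariant lift, and for that lift (iv) applies directly.

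The only potential obstacle is bookkeeping about identifications rather than any analysis. One must be sure that "the same equivalence relation on $\partial G$" is meant through the canonical identifications of Section~1, and that these identifications are compatible, that is, transitive. Compatibility holds because the boundary extensions of the equivariant quasi-isometries compose and are independent of the choices, since any two such maps are a bounded distance apart by Lemma~\ref{lem:equivariantbounded}. With that in place the proposition is immediate.
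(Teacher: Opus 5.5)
Your proposal is correct and is essentially the paper's argument: the paper deduces the proposition directly from Theorem~\ref{thm:harmonicCT} and Theorem~\ref{thm:minimalPeano}(iv), which show that all these boundary maps are literally $\partial\iota$ under the identifications of Section~1, so the induced relation is $\sim_{CT}$ by definition. Your extra bookkeeping is sound but not needed beyond what those results already give: this covers the compatibility of the identifications $\partial\Hh^2_X\cong\partial G$, and it covers other lifts, where postcomposing with any $\delta\in\Gamma$ leaves the relation unchanged because $\delta$ is injective on $S^2$.
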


The relation $\sim_{CT}$ is topological and carries no metric scale.  The area of a minimal fiber, the $L^2$--norm of its second fundamental form, or its principal curvatures depend on the hyperbolic metric of $M_f$; Propositions~\ref{prop:area-curvature} and \ref{prop:maxcurv} give restrictions valid for every stable minimal fiber.

\begin{question}
Can the hyperbolic metric of $M_f$ together with the stable and unstable measured laminations be used to estimate $A_0$, or the distribution of $|A|^2$ on a least-area fiber, beyond the bounds $2\pi(g-1)<A_0<4\pi(g-1)$ and $\min|A|^2<2\le\max|A|^2$?
\end{question}

\begin{question}
Let $X_n\in\Teich$ with $\E(X_n)\to\infty$.  Which projective measured foliations occur as limits of the classes of the vertical foliations of $\Phi_{X_n}$?  Under what hypotheses are the only limits $[\mathcal F^s]$ and $[\mathcal F^u]$, and when do the rescaled equivariant maps $\widetilde h_{X_n}$ converge to an equivariant map to the dual $\R$--tree?
\end{question}

\section{Relation with escaping harmonic maps}

Pereira do Vale and Verjovsky \cite{PV} constructed harmonic maps whose lifts have space-filling boundary behavior.  In the fibered hyperbolic setting, Theorem~\ref{thm:harmonicCT} identifies the boundary map with the Cannon--Thurston map, and Proposition~\ref{prop:anyrepresentative} shows that it depends only on the fiber subgroup inclusion: equivariant lifts of homotopic maps are a bounded distance apart.  The harmonic map selects, for each conformal structure on the fiber, a representative of the homotopy class; a least-area fiber is a representative that determines its own conformal structure $X_{\min}$, which is a global minimum of $\E$.

\section*{Funding}
This work was supported by Proyecto PAPIIT, Direcci\'on General de Asuntos del Personal Acad\'emico, Universidad Nacional Aut\'onoma de M\'exico [grant number IN103324].

\section*{Acknowledgments}
I used ChatGPT (OpenAI) and Claude (Anthropic) for proofreading, editorial revision, and auxiliary checks of calculations.  I independently verified the mathematical content and am responsible for the final manuscript.

\bigskip
\noindent
\textsc{Alberto Verjovsky}\\
Instituto de Matem\'aticas, Universidad Nacional Aut\'onoma de M\'exico\\
Unidad Cuernavaca, M\'exico\\
\texttt{albertoverjovsky@gmail.com}

\end{document}